\newcommand{\RR}{\mathbb{R}}
\newcommand{\NN}{\mathbb{N}}
\newcommand{\Cc}{\mathcal{C}}
\newcommand{\Pp}{\mathcal{P}}
\newcommand{\Xx}{\mathcal{X}}
\newcommand{\E}{\mathbf{E}}
\renewcommand{\d}{\mathrm{d}}
\newcommand{\id}{\mathrm{id}}
\newcommand{\Id}{\mathrm{Id}}
\newcommand{\note}[1]{{\textbf{\color{red}#1}}}
\newtheorem{theorem}{Theorem}[section]
\newtheorem{proposition}[theorem]{Proposition}
\newtheorem{lemma}[theorem]{Lemma}
\newtheorem{corollary}[theorem]{Corollary}
\newtheorem{definition}[theorem]{Definition}
\DeclareMathOperator{\Var}{Var}
\DeclareMathOperator{\cost}{cost}
\DeclareMathOperator{\bmu}{\boldsymbol{\mu}}
\DeclareMathOperator{\bnu}{\boldsymbol{\nu}}
\DeclareMathOperator{\bphi}{\boldsymbol{\phi}}
\DeclareMathOperator{\bh}{\boldsymbol{h}}
\DeclareMathOperator{\bg}{\boldsymbol{g}}
\DeclareMathOperator{\bW}{\boldsymbol{W_2}}
\providecommand{\keywords}[1]
{
  \small	
  \textbf{\textit{Keywords---}} #1
}
\providecommand{\MSC}[1]
{
  \small	
  \textbf{\textit{MSC Classification---}} #1
}
\newcommand{\rev}[1]{{ #1}}
\title{Displacement smoothness of entropic optimal transport}
\author{
Guillaume Carlier\thanks{Ceremade, Université Paris Dauphine, PSL, 75775, Paris, and Inria-Paris, Mokaplan}
\and
L\'ena\"ic Chizat\thanks{Institute of Mathematics, École polytechnique fédérale de Lausanne (EPFL), Station Z, CH-1015 Lausanne}
\and
Maxime Laborde\thanks{Université Paris Cité and Sorbonne Université, CNRS, Laboratoire Jacques-Louis Lions (LJLL), F-75006 Paris, France}
}
\begin{document}
\maketitle

\begin{abstract}
The function that maps a family of probability measures to the solution of the dual entropic optimal transport problem is known as the Schr\"odinger map.
We prove that when the cost function is $\Cc^{k+1}$ with $k\in \NN^*$ then this map is Lipschitz continuous from the $L^2$-Wasserstein space to the space of $\Cc^k$ functions. 
Our result holds on compact domains and covers the multi-marginal case. \rev{We also include regularity results under negative Sobolev metrics weaker than Wasserstein under stronger smoothness assumptions on the cost.}
As applications, we prove displacement smoothness of the entropic optimal transport cost and the well-posedness of certain Wasserstein gradient flows involving this functional, including the Sinkhorn divergence and a multi-species system.
\end{abstract}
\keywords{Entropic Optimal Transport, Schr\"odinger map, Wasserstein gradient flows.}\\
\noindent
\MSC{ 49Q22, 49K40, 35A15.}

\section{Introduction}\label{sec:introduction}

The main goal of this paper is to study the regularity of the multi-marginal Entropic Optimal Transport (EOT) problem under ``displacement'' of the marginals, and to apply these results to prove the well-posedness of certain evolution equations and optimization methods involving EOT. For clarity of presentation, let us first present the context and our results in the classical two marginals case. Let $\Xx_1,\Xx_2 \subset \RR^d$ be two compact convex sets, $\bmu = (\mu_1,\mu_2) \in \Pp(\Xx_1)\times \Pp(\Xx_2)$ two probability measures and $c \in \Cc^k(\Xx_1\times \Xx_2)$ a  $k$-times continuously differentiable \emph{cost} function. We consider the \emph{entropic optimal transport} problem defined as
\begin{align}\label{eq:EOT}
   E(\mu_1,\mu_2) \coloneqq \min_{\gamma \in \Pi(\mu_1,\mu_2)} \int c(x_1,x_2)\d\gamma(x_1,x_2) + H(\gamma|\mu_1\otimes \mu_2)
\end{align}
where $\Pi(\mu_1,\mu_2)$ is the set of \emph{transport plans} between $\mu_1$ and $\mu_2$, that is probability measures on $\Xx_1\times \Xx_2$ with marginals $\mu_1$ and $\mu_2$, and $H$ is the \emph{relative entropy} defined as $H(\mu|\nu) = \int \log(\d \mu / \d\nu)\d\mu$ if $\mu$ is absolutely continuous w.r.t.~$\nu$ and $+\infty$ otherwise.

This problem can be seen as a regularization of the optimal transport problem~\cite{villani2009optimal, santambrogio2015optimal} that benefits from improved computational~\cite{kosowsky1994invisible, cuturi2013sinkhorn} and statistical properties~\cite{genevay2019sample,mena2019statistical,del2022improved}, at the expense of an approximation error that can be quantified~\cite{pal2019difference,weed2018explicit,chizat2020faster,conforti2021formula, CPT22, Nutz22}.  It is also tightly related to the Schr\"odinger bridge problem~\cite{schrodinger1932theorie,leonard2012schrodinger}, which is a modification of Eq.~\eqref{eq:EOT} obtained by replacing $\mu_1\otimes \mu_2$ by the product Lebesgue measure in the relative entropy term.

\subsection{Schr\"odinger Map}
Eq.~\eqref{eq:EOT} defines a convex optimization problem which admits a dual concave maximization formulation
\begin{align*}
   E(\mu_1,\mu_2) = \max_{\substack{\phi_1\in \Cc^0(\Xx_1)\\ \phi_2\in \Cc^0(\Xx_2)}} \int_{\Xx_1}\!\! \phi_1\d\mu_1 +\int_{\Xx_2}\!\! \phi_2\d\mu_2 + 1 - \int_{\Xx_1\times \Xx_2}\!\!\!\! e^{\phi_1(x_1)+\phi_2(x_2)-c(x_1,x_2)}\d\mu_1(x_1)\d\mu_2(x_2).
\end{align*}
This dual problem admits solutions which satisfy, for $\mu_1\otimes \mu_2$ almost every $(x_1,x_2)$, the following first order optimality conditions, known as the \emph{Schr\"odinger system}:
\begin{align*}
    \left\{
    \begin{aligned}
    \phi_1(x_1) &= -\log \int_{\Xx_2} e^{\phi_2(x_2)-c(x_1,x_2)}\d\mu_2(x_2)\\
    \phi_2(x_2) &= -\log \int_{\Xx_1} e^{\phi_1(x_1)-c(x_1,x_2)}\d\mu_1(x_1)
    \end{aligned}
    \right. .
\end{align*}
In this paper, our main object of interest is the particular solution $(\phi_1,\phi_2)$ which satisfies the Schr\"odinger system \emph{for every} $(x_1,x_2)\in \Xx_1\times \Xx_2$. It is not difficult to see that this solution inherits the $\Cc^k$ regularity of $c$ and is unique in the quotient space $\tilde \Cc^k \coloneqq \Cc^k(\Xx_1)\times \Cc^k(\Xx_2)/\sim$ where the equivalence relation 
\[
(\phi_1,\phi_2)\sim(\psi_1,\psi_2)\quad \Leftrightarrow\quad \exists \kappa \in \RR \text{ such that } \phi_1 =\psi_1+\kappa \text{ and } \phi_2 = \psi_2-\kappa
\]
captures the trivial invariance of the dual problem.
This particular choice of solution $(\phi_1,\phi_2)$ is arguably the most natural to consider thanks to its stability.  It is also useful in many contexts because it represents the differential of the  functional $E$~\cite{feydy2019interpolating}. We refer to this special solution $(\phi_1,\phi_2)$ as the \emph{Schr\"odinger potentials} and we define the \emph{Schr\"odinger map} $S:\Pp(\Xx_1)\times \Pp(\Xx_2) \to \tilde \Cc^k$ as
\begin{align}\label{def-S-intro}
    S:\bmu=(\mu_1,\mu_2)\mapsto \bphi=(\phi_1,\phi_2).
\end{align}

\subsection{Main result in the two-marginals case}
Our main contribution is a proof that the Schr\"odinger map $S$ is Lipschitz continuous with respect to the following distances:
\begin{itemize}
    \item We endow $\Pp(\Xx_i)$ with the Wasserstein metric defined for two probability measures $\mu, \nu \in \Pp(\Xx_i)$ by 
    \[
W_2(\mu,\nu) \coloneqq \left(\min_{\gamma \in \Pi(\mu,\nu)} \int_{\Xx_i \times \Xx_i} \Vert y-x\Vert^2\d\gamma(x,y)\right)^{\frac12}.
\]
 and then we endow  $\Pp(\Xx_1)\times \Pp(\Xx_2)$ with the product Wasserstein  $\bW$ metric given for $\bmu =(\mu_1,\mu_2),\, \boldsymbol{\nu} =(\nu_1, \nu_2) \in \Pp(\Xx_1)\times\Pp(\Xx_2) $
 \[
 \bW(\bmu,\boldsymbol{\nu}) \coloneqq \left(W_2(\mu_1,\nu_1)^2 +W_2(\mu_2,\nu_2)^2 \right)^{\frac12}.
 \]

\item We endow $\tilde \Cc^k$ with the product, quotient supremum $\Cc^k$ norm defined from the usual $\Cc^k$ norm $\Vert \cdot \Vert_{\Cc^k}$ as
\[
\Vert (\phi_1,\phi_2)\Vert_{\tilde \Cc^k} \coloneqq \inf_{\kappa \in \RR} \Vert \phi_1-\kappa\Vert_{\Cc^k} + \Vert \phi_2+\kappa\Vert_{\Cc^k} .
\]
\end{itemize}

Our Lipschitz stability result for the Schr\"odinger map -- which is a particular case of the more general results Thm.~\ref{thm:main} and  Cor.~\ref{cor:lipschitz-multi} that cover the multi-marginal case and finer regularity results -- reads as follows (where by convention $\NN^*:=\NN\setminus \{0\}$).
\begin{theorem}\label{thm:main-2marginals}

If $c\in \Cc^{k+1}(\Xx_1\times \Xx_2)$ for $k\in \NN^*$, then there exists $C>0$ that only depends on $\Vert c\Vert_{\Cc^{k+1}}$ such that for all $\bmu,\bmu' \in \Pp(\Xx_1)\times \Pp(\Xx_2)$,  
$$
\Vert S(\bmu) - S(\bmu') \Vert_{\tilde \Cc^k} \leq C  \bW(\bmu,\bmu').
$$
\end{theorem}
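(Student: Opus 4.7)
My approach would be to (a) establish uniform $\Cc^{k+1}$ a priori bounds on the Schr\"odinger potentials, (b) prove stability in the quotient $\tilde\Cc^0$ via a contraction argument for the Sinkhorn operator, and (c) bootstrap to $\tilde\Cc^k$ by differentiating the Schr\"odinger system. For~(a) I would fix a canonical representative by imposing $\phi_1(x_1^0)=0$ at a base point $x_1^0\in\Xx_1$; evaluating the first Schr\"odinger equation at $x_1^0$ gives $\int e^{\phi_2-c(x_1^0,\cdot)}\d\mu_2=1$, while a Lipschitz bound on $\phi_2$ follows from the second equation by comparing two values and bounding the resulting ratio of integrals using $\Vert c\Vert_{\Cc^1}$. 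Together these pin down $\Vert\phi_2\Vert_\infty$. Substituting back into $\phi_1=-\log\int e^{\phi_2-c}\d\mu_2$ and differentiating in $x_1$ (which only touches $c$) gives a $\Cc^{k+1}$ bound on $\phi_1$; one further iteration gives the same for $\phi_2$, with constants depending only on $\Vert c\Vert_{\Cc^{k+1}}$ and the diameters of $\Xx_1,\Xx_2$.

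For~(b), I would view $\bphi=S(\bmu)$ as the unique fixed point, in the quotient, of the Sinkhorn operator $T_\bmu$. A classical Birkhoff--Hopf projective contraction (equivalently, a direct log-sum-exp estimate) shows that $T_\bmu$ is a $\lambda$-contraction on $\tilde\Cc^0$ with $\lambda<1$ depending only on $\mathrm{osc}(c)$. I would then split
\[
S(\bmu)-S(\bmu')=\bigl[T_\bmu(S(\bmu))-T_\bmu(S(\bmu'))\bigr]+\bigl[T_\bmu(S(\bmu'))-T_{\bmu'}(S(\bmu'))\bigr];
\]
the first bracket has $\tilde\Cc^0$-norm at most $\lambda\Vert S(\bmu)-S(\bmu')\Vert_{\tilde\Cc^0}$, whereas each component of the second bracket has the form $-\log\int f\,\d\mu_i+\log\int f\,\d\mu_i'$ with $f$ uniformly Lipschitz by~(a), hence is bounded by $\bW(\bmu,\bmu')$ via Kantorovich--Rubinstein duality and $W_1\le W_2$. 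Rearranging yields $\Vert S(\bmu)-S(\bmu')\Vert_{\tilde\Cc^0}\lesssim\bW(\bmu,\bmu')$.

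For~(c), I would pick representatives with $\phi_1(x_1^0)=\phi_1'(x_1^0)=0$ and, for any multi-index $\alpha$ with $|\alpha|\le k$, apply Fa\`a di Bruno to $\phi_1=-\log\int e^{\phi_2-c}\d\mu_2$: this expresses $\partial^\alpha\phi_1(x_1)$ as a polynomial in averages $\int Q_\beta(\partial_{x_1}^{\le|\alpha|}c)\,e^{\phi_1+\phi_2-c}\d\mu_2$ for fixed polynomials $Q_\beta$. Splitting $\partial^\alpha\phi_1-\partial^\alpha\phi_1'$ into a measure-change part (same potentials, varying marginal) and a potential-change part (same marginal, varying potentials), the former is bounded by $W_2(\mu_2,\mu_2')$ precisely because $c\in\Cc^{k+1}$ keeps the integrand $\Cc^1$ in $x_2$ after $k$ derivatives in $x_1$, while the latter is bounded by $\Vert\bphi-\bphi'\Vert_{\tilde\Cc^0}$ via $|e^a-e^b|\le e^{\max(|a|,|b|)}|a-b|$ and~(a), hence by $\bW(\bmu,\bmu')$ via~(b). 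A symmetric estimate bounds $\phi_2-\phi_2'$ in $\Cc^k$ as well. The main obstacle is step~(b): a naive $1$-Lipschitz bound on the log-sum-exp is preserved by constant shifts and fails to close the fixed-point estimate on the full space, so the strict contraction must be genuinely extracted from the quotient structure; step~(c) is then essentially mechanical, with the role of the $(k{+}1)$-th derivative of $c$ being exactly to provide the extra $x_2$-smoothness needed to convert $W_1\le W_2$ into a valid $\tilde\Cc^k$ control.
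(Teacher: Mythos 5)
Your proposal is correct but follows a genuinely different route from the paper. You reconstruct the Hilbert-metric contraction strategy of Deligiannidis, de Bortoli and Doucet (cited in the paper as \cite{deligiannidis2021quantitative}), which the paper explicitly discusses and deliberately avoids. The paper instead applies the Implicit Function Theorem to $G(\bphi,t)=T(\bphi,\bmu^t)$ along a displacement interpolation $\bmu^t$: the two key inputs are that $D_\bphi T$ is an invertible self-map of $\tilde\Cc^k$ with a uniformly bounded inverse (Lem.~\ref{lem:inverse-function}, proved via a Fredholm/compact-operator argument and an $L^2$ coercivity estimate, not via a contraction), and that $\Vert D_t G\Vert_{\tilde\Cc^k}\lesssim\sqrt{\cost(\gamma)}$ (Lem.~\ref{lem:differential-in-t}, a Cauchy--Schwarz estimate). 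Your steps (a) and (c) match the paper's a priori bounds (Prop.~\ref{prop:preliminaries}) and the way the $k{+}1$ derivatives on $c$ enter, and your observation that the contraction must live on the quotient $\tilde\Cc^0$, not on $\Cc^0$ itself, is exactly the point; for the one-step Sinkhorn map $-\bar T$ you would actually need to iterate twice, since $-\bar T$ is a swap, but $(-\bar T)^2$ does contract by Birkhoff's theorem. The trade-off is as the paper says: your approach is more elementary, works for $N=2$ even at $k=0$ (merely Lipschitz $c$), and bypasses the functional-analytic machinery, but (i) the Hilbert projective metric does not give a usable contraction in the multi-marginal case, whereas the paper's Fredholm argument does, and (ii) the IFT proof delivers, for free, $\Cc^p$-in-$t$ smoothness of $t\mapsto S(\bmu^t)$ for $c\in\Cc^{k+p}$ (Thm.~\ref{thm:main}), which is what the paper needs to prove displacement differentiability and semi-convexity of $E$ in Thm.~\ref{thm:smoothness}; your fixed-point argument yields Lipschitz bounds only and would need a separate differentiation of the fixed-point equation to recover the higher-order statement.
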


This translates into a useful regularity result for the functional $E$ (see Thm.~\ref{thm:smoothness}).

\begin{corollary}
If $c\in \Cc^{2}(\Xx_1\times \Xx_2)$, then given $(\mu^t_1)_{t\in [0,1]}$ and $(\mu^t_2)_{t\in [0,1]}$ two Wasserstein geodesics, the map $h:t \mapsto E(\mu_1^t,\mu_2^t)$ is differentiable and its derivative satisfies
$$
\vert h'(t)-h'(s)\vert \leq C \vert t-s\vert \bW(\bmu^0, \bmu^1) 
$$
where $\bmu^t= (\mu_1^t , \mu_2^t)$ and $C>0$ only depends on $\Vert c\Vert_{\Cc^2}$. In particular, $E$ and $-E$ are displacement semi-convex.
\end{corollary}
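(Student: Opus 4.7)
The plan is to combine Thm.~\ref{thm:main-2marginals} applied with $k=1$ (which is allowed since $c\in\Cc^2$) with an envelope argument yielding an explicit formula for $h'(t)$. Let $T_i$ denote the Brenier map from $\mu_i^0$ to $\mu_i^1$, set $\tau_i^t(x)=(1-t)x+tT_i(x)$ so that $\mu_i^t=(\tau_i^t)_\#\mu_i^0$, and write $\bphi^t=(\phi_1^t,\phi_2^t)=S(\bmu^t)$. Strong duality and the Schr\"odinger system (which forces $\int e^{\phi_1^t+\phi_2^t-c}\,d\mu_1^t d\mu_2^t=1$) yield $h(t)=\int\phi_1^t\,d\mu_1^t+\int\phi_2^t\,d\mu_2^t$. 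The envelope theorem (the first variation of $E$ in $\mu_i$ at the optimum is $\phi_i$, up to a constant) together with the continuity equation of the $W_2$-geodesic gives
\[
h'(t)=\sum_{i=1,2}\int\nabla\phi_i^t(\tau_i^t(x))\cdot(T_i(x)-x)\,d\mu_i^0(x).
\]

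To bound $|h'(t)-h'(s)|$, I would split
\[
\nabla\phi_i^t(\tau_i^t)-\nabla\phi_i^s(\tau_i^s)=\bigl[\nabla\phi_i^t(\tau_i^t)-\nabla\phi_i^t(\tau_i^s)\bigr]+\bigl[\nabla\phi_i^t(\tau_i^s)-\nabla\phi_i^s(\tau_i^s)\bigr].
\]
The first bracket is controlled by $\|D^2\phi_i^t\|_\infty\,|t-s|\,|T_i(x)-x|$; the second, pointwise by $\|\phi_i^t-\phi_i^s\|_{\tilde\Cc^1}\le C|t-s|\bW(\bmu^0,\bmu^1)$, thanks to Thm.~\ref{thm:main-2marginals} and the constant-speed property $\bW(\bmu^t,\bmu^s)=|t-s|\bW(\bmu^0,\bmu^1)$ of $W_2$-geodesics. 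A uniform $\Cc^2$-bound on $\phi_i^t$ depending only on $\|c\|_{\Cc^2}$ comes from differentiating the Schr\"odinger system twice: $D^2\phi_1$ is, at each $x_1$, the expectation of $D^2_{x_1}c(x_1,\cdot)$ under the associated entropic conditional measure minus the covariance of $\nabla_{x_1}c(x_1,\cdot)$, both estimable by $\|c\|_{\Cc^2}$. Integrating against $d\mu_i^0$, applying Cauchy--Schwarz, and using $\int|T_i(x)-x|^2\,d\mu_i^0(x)=W_2(\mu_i^0,\mu_i^1)^2\le\diam(\Xx_i)\,W_2(\mu_i^0,\mu_i^1)$ then delivers the desired Lipschitz estimate on $h'$.

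Displacement semi-convexity of $\pm E$ is then immediate: $|h'(t)-h'(s)|\le C|t-s|\bW(\bmu^0,\bmu^1)\le C\diam\,|t-s|$ implies $|h''|\le C\diam$ uniformly, so both $h$ and $-h$ are semi-convex along any Wasserstein geodesic with a uniform modulus. The main technical obstacle is the rigorous justification of the envelope formula for $h'$, which relies on $t\mapsto\bphi^t$ being Lipschitz into $\tilde\Cc^1$ (Thm.~\ref{thm:main-2marginals}), sufficient to differentiate under the integral sign after the envelope cancellation kills the $\partial_t\phi_i^t$ contribution.
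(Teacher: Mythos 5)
Your proposal is correct and follows essentially the same route as the paper's proof of Thm.~\ref{thm:smoothness}: the same envelope formula for $h'(t)$, the same two-term splitting of $\nabla\phi_i^t(x_i^t)-\nabla\phi_i^s(x_i^s)$ controlled respectively by a uniform $\Cc^2$-bound on the Schr\"odinger potentials and by the $\tilde\Cc^1$-Lipschitz estimate of Thm.~\ref{thm:main-2marginals}. Two small points to tighten: (i) you work with the Brenier map $T_i$, which tacitly assumes $\mu_i^0$ absolutely continuous; for arbitrary $W_2$-geodesics one should use an optimal plan $\gamma_i$ as the paper does, which changes nothing else; (ii) your justification of the envelope formula is stated informally — the paper makes this rigorous by exploiting convexity of $E$ in $\otimes_i\mu_i$ to sandwich the finite differences of $h$ and then checking that the bilinear cross-term $\int V_t\,d(\otimes_i\mu_i^s-\otimes_i\mu_i^t)$ is $o(|s-t|)$, a step that does require some care because of the product structure of the reference measure.
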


As an application of these results, we will prove the well-posedness of Wasserstein gradient flows for several energies involving the functional $E$ in Section~\ref{sec:applications}, and also establish exponential convergence to  equilibrium in some cases. Since the Wasserstein gradient of $E$ is  $\nabla S_1, \nabla S_2$ where  $S=(S_1,S_2)$ is defined in  \eqref{def-S-intro}, we have for example the following result (see Prop.~\ref{prop:multi-species}), where $H(\mu)\coloneqq H(\mu|\mathrm{Leb})$ is the (convex) differential entropy.
\begin{corollary}
Let $c \in \mathcal{C}^2(\Xx_1 \times \Xx_2)$ and $\bmu^{0}=(\mu_1^0, \mu_2^0) \in \Pp(\Xx_1)\times \Pp(\Xx_2)$. Then the functional $F$ defined by
\[
F(\bmu) := E(\bmu) + H(\mu_1) + H(\mu_2),
\]
admits a unique Wasserstein gradient flow starting from $\bmu^0$, i.e.~there exists a unique absolutely continuous curve $\bmu^t =(\mu_{1}^t, \mu_{2}^t)\in \Pp(\Xx_1)\times \Pp(\Xx_2)$ for $\bW$, satisfying
\[
\left\{ \begin{aligned}
\partial_t \mu_{1} &= \nabla \cdot (\mu_1 \nabla S_1(\bmu)) +  \Delta \mu_1\\
\partial_t \mu_{2} &= \nabla \cdot (\mu_2 \nabla S_2(\bmu)) + \Delta \mu_2 \\
\bmu_{\vert_{t=0}}&=\bmu^0
\end{aligned}
\right.
\]
with no-flux boundary conditions.
If in addition $H(\mu_1^0), H(\mu_2^0) <+\infty$, then $\bmu^t$ converges at an exponential rate to the unique equilibrium  $\bmu^*$ (see Eq.~\eqref{defdemustar}), in the sense that there exists $\kappa >0$ independent of $\bmu^0$ such that
\[
F(\bmu^t) - F(\bmu^*) \leq e^{-\kappa t}(F(\bmu^0) - F(\bmu^*)).
\]
\end{corollary}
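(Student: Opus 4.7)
The plan has three movements, essentially applying the Ambrosio-Gigli-Savar\'e (AGS) gradient flow framework after establishing semi-convexity of $F$ from the preceding results, and then exploiting the convex entropy term to get exponential decay.

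\textbf{Well-posedness via semi-convexity.} By the preceding corollary, $\bmu \mapsto E(\bmu)$ is $\lambda_E$-displacement semi-convex on the product Wasserstein space $(\Pp(\Xx_1)\times\Pp(\Xx_2),\bW)$ with $\lambda_E\in\RR$ depending only on $\|c\|_{\Cc^2}$. By McCann's theorem, each $\mu_i\mapsto H(\mu_i)$ is displacement convex on $\Pp(\Xx_i)$. Summing, $F$ is $\lambda_E$-geodesically semi-convex, proper, lower semicontinuous and bounded below. The AGS theory (trivially extended to a product of Wasserstein spaces) then yields existence and uniqueness of an $\mathrm{EVI}_{\lambda_E}$ gradient flow $\bmu^t$ starting from $\bmu^0$. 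To identify the PDE, recall that the Wasserstein subdifferential of $E$ at $\bmu$ is $(\nabla S_1(\bmu),\nabla S_2(\bmu))$ (by the first variation formula for $E$ recalled in the paper's preamble, combined with Thm.~\ref{thm:main-2marginals} to ensure this object is well-defined and Lipschitz in $\bmu$), while the subdifferential of $H(\mu_i)$ is $\nabla\log\mu_i$ on its domain. Writing the continuity equation on the compact set $\Xx_i$ with total velocity $\nabla S_i(\bmu)+\nabla\log\mu_i$ then gives the claimed PDE with no-flux boundary conditions.

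\textbf{Equilibrium and entropy dissipation.} The Euler-Lagrange equation $\nabla(S_i(\bmu^*)+\log\mu_i^*)=0$ yields $\mu_i^* \propto e^{-S_i(\bmu^*)}$, which is the fixed-point characterization of $\bmu^*$ in~\eqref{defdemustar}; uniqueness of $\bmu^*$ follows from the strict convexity of $H$ together with a short contraction argument based on the Lipschitz estimate of Thm.~\ref{thm:main-2marginals}. Along the flow, a standard chain-rule computation gives the dissipation identity
$$
\frac{\d}{\d t}F(\bmu^t) = -\sum_{i=1}^2\int_{\Xx_i}\bigl|\nabla S_i(\bmu^t)+\nabla\log\mu_i^t\bigr|^2\d\mu_i^t =: -D(\bmu^t).
$$

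\textbf{Exponential convergence.} The remaining task, which is the main obstacle, is to establish a modified log-Sobolev-type inequality of the form $F(\bmu)-F(\bmu^*) \leq \tfrac{1}{2\kappa}\,D(\bmu)$ for some $\kappa>0$ uniform on sublevel sets of $F$; combined with the dissipation identity, Gr\"onwall's lemma then gives the exponential rate. The strategy is to combine three ingredients: (i) the log-Sobolev inequality for the uniform measure on the compact convex set $\Xx_i$; (ii) the Holley-Stroock perturbation lemma applied to the potential $S_i(\bmu)$, which is uniformly bounded in $\Cc^0$ by Thm.~\ref{thm:main-2marginals}, giving a uniform LSI constant for the one-marginal Gibbs measures $e^{-S_i(\bmu)}/Z_i(\bmu)$; and (iii) the Lipschitz estimate $\|S_i(\bmu)-S_i(\bmu^*)\|_{\Cc^1}\leq C\,\bW(\bmu,\bmu^*)$ from Thm.~\ref{thm:main-2marginals}, used to control the cross terms arising from the coupling of the two marginals through $E$. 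The delicate point is that the dissipation of one marginal must dominate the perturbation it induces on the other through $E$, requiring the LSI constant (depending only on $\diam(\Xx_i)$ and $\mathrm{osc}(S_i)$) to be sufficiently large relative to the Lipschitz constant of $S$; this is the same mechanism behind exponential convergence results for mean-field Langevin dynamics. The hypothesis $H(\mu_1^0),H(\mu_2^0)<+\infty$ ensures $F(\bmu^0)<+\infty$, so that the exponential bound is informative from time zero.
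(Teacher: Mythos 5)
Your well-posedness argument (semi-convexity of $E$ from Thm.~\ref{thm:smoothness} plus McCann's convexity of $H$, then AGS on the product space) and your dissipation identity $\tfrac{\d}{\d t}F(\bmu^t)=-\sum_i\int|\nabla S_i(\bmu^t)+\nabla\log\mu_i^t|^2\,\d\mu_i^t$ are both correct and match the paper's. The gap is in the exponential-convergence step.

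You propose to establish $F(\bmu)-F(\bmu^*)\leq\tfrac{1}{2\kappa}D(\bmu)$ by applying a log-Sobolev inequality to each one-marginal Gibbs measure $e^{-S_i(\bmu)}$ and then controlling ``cross terms'' coming from the coupling via the Lipschitz estimate on $S$, and you acknowledge this requires the marginal LSI constant to ``dominate'' the Lipschitz constant of $S$. That route would only deliver the conclusion under a smallness condition on the coupling --- a hypothesis the statement does not contain, and which the paper does not need. The mean-field-Langevin mechanism you invoke is indeed what powers the paper's \emph{single-species} result (Prop.~\ref{prop: GF + equilibrium 1 species}), but there $\mu\mapsto E(\mu,\nu)$ with $\nu$ fixed is convex (a supremum of affine forms), so there are no problematic cross terms. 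In the multi-species case $E(\bmu)$ is not convex in $\bmu$ and your cross-term estimate cannot be closed unconditionally.

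The idea you are missing is the exact identity that reduces the multi-species problem to a \emph{single} log-Sobolev inequality for a \emph{fixed} reference measure. After normalizing $\int_{\Xx}e^{-c}=1$ so that $\gamma^*:=e^{-c}\in\Pp(\Xx)$, one has
\[
F(\bmu)=\min_{\gamma\in\Pi(\bmu)}H(\gamma\mid\gamma^*)=H(\gamma(\bmu)\mid\gamma^*),
\qquad
\gamma(\bmu):=e^{-c+\sum_i S_i(\bmu)}\textstyle\prod_i\mu_i,
\]
and furthermore, because the $i$-th marginal of $\gamma(\bmu)$ is $\mu_i$ and $\nabla_{x_i}\log\frac{\gamma(\bmu)}{\gamma^*}=\nabla_{x_i}\big(\log\mu_i+S_i(\bmu)\big)$, the dissipation term you computed factorizes as
\[
D(\bmu)=\sum_{i}I_i\big(\mu_i\mid e^{-S_i(\bmu)}\big)=I\big(\gamma(\bmu)\mid\gamma^*\big).
\]
Now $\gamma^*$ is a fixed probability measure on the compact convex set $\Xx$ with $\log$-density $-c\in\Cc^0$, so Holley--Stroock gives a single LSI constant $\kappa>0$ for $\gamma^*$, yielding $D(\bmu)\geq\kappa\,F(\bmu)$ unconditionally, and Gr\"onwall concludes. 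Note also that this identity immediately gives uniqueness of $\bmu^*$ (as the marginals of $\gamma^*$, the unique minimizer of $H(\cdot\mid\gamma^*)$), which is cleaner than the contraction argument you sketch --- which would again require a smallness hypothesis.
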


Let us mention that the system of PDEs in the previous corollary may naturally appear as an evolution model for cities: $\mu_1$ represents the distribution of agents, $\mu_2$ the distribution of firms, $S_2$ the wage paid by firms to agents and the fact that $S_1$ and $S_2$ are given by \eqref{def-S-intro} captures an equilibrium condition on the labour market. For more details about such models, we refer to \cite{Laborde2020Nonlinear} (for a gradient flow approach without entropic regularization) and to \cite{Barilla2021mean} (in the different context of mean-field games). For extensions to more than two species and more general functionals (typically $F+G$ where $G$ is displacement convex), see Section~\ref{sec:applications}.

More applications of Thm.~\ref{thm:main-2marginals} are developed in companion papers that study optimization dynamics for trajectory inference~\cite{chizat2022trajectory} and regularized Wasserstein barycenters~\cite{chizat2023doubly}, also involving the functional $E$.

\subsection{Discussion of prior work}
 
Several works have studied the stability of the unregularized optimal transport problem~\cite{delalande2021quantitative,berman2021convergence,gigli2011holder}. In particular, it is known that with the square-distance cost, the Kantorovich potential from $\mu_1$ to $\mu_2$ (i.e.~the counterpart of the Schr\"odinger potential $\phi_1$ in unregularized optimal transport) is a $\frac12$-H\"older function of $\mu_2$ from $W_2$ to $\dot H^1(\mu_1)$ under suitable assumptions on the fixed reference measure $\mu_1$, and that this is the strongest regularity that one can hope for in general~\cite{gigli2011holder}.

In~\cite{carlier2020differential}, it is proved by inverse function arguments that $S$ is Lipschitz continuous and smooth as a map from $L^\infty_{++}\to L^\infty$, given some fixed reference measures on the ambient space. Our results will use a similar strategy but \rev{in contrast to~\cite{carlier2020differential} and other follow-up works such as~\cite[Thm.~3]{goldfeld2022limit}, we do not consider stability under additive perturbations of the marginals, but under displacement perturbations by changing the parametrization of the problem.
This leads to  the stronger conclusion that $E$ is smooth in Wasserstein distance which, 
}
as we shall see, is  particularly useful in the context of gradient flows of energies involving $E$. 
Let us also mention that much less is known about stability in the non-compact case, see e.g.~\cite{eckstein2021quantitative} that shows Lipschitz continuity of $E$, ~\cite{nutz2022stability} where the continuity of the Schr\"odinger map for the topology of convergence in probability in a certain non-compact setting, see also~\cite{ghosal2021stability} for further stability results for the primal variable. Our results are finer, but rely in an essential way on the compact setting.

\rev{Note that a result equivalent to Thm.~\ref{thm:main-2marginals} for $k=0$ was already proved in~\cite{deligiannidis2021quantitative}. Their elegant approach consists in showing that the Sinkhorn's iteration is stable under $W_1$ perturbations (or, equivalently in the compact setting, $W_2$ perturbations) of the marginals which, combined with the fact that this iteration is a contraction for the so-called Hilbert metric, leads to the conclusion.
The strength of their analysis is that it applies to $k=0$ (i.e.~merely Lipschitz continuous costs); and from the result with $k=0$, it is not difficult to prove the $k\geq 1$ case under regularity assumptions on the cost. However, their proof technique would likely not extend to the multi-marginal case (a well-know limitation of the Hilbert metric approach). Here, we propose an independent proof technique for all $k\geq 1$ (for displacement smoothness, we need the case $k=1$) and our analysis also gives additional information on higher degrees of smoothness of the Schr\"odinger map and of $E$. }

The rest of the paper is organized as follows. In Section~\ref{sec:multi-marginal}, we introduce the multi-marginal setting and state the full version of our regularity results for the Schr\"odinger map. The proofs of those statements can be found in Section~\ref{sec:proofs}. Finally, in Section~\ref{sec:applications} we study the regularity of the functional $E$ and apply our results to the analysis of certain Wasserstein gradient flows involving $E$.

\section{The multi-marginal case}\label{sec:multi-marginal}
\paragraph{Notation and assumptions on the domain}
Let $N\geq 2$ be the number of marginals, let $\Xx_i \subset \RR^d$ be convex and  compact, for $i\in [N]\coloneqq \{1,\dots,N\}$, and let $\Xx\coloneqq\prod_{i=1}^N \Xx_i$. Given $i\in [N]$, we denote $\Xx_{-i}=\prod_{j\neq i} \Xx_j$ and identify $\Xx$ to $\Xx_i\times \Xx_{-i}$, i.e.~we denote $x=(x_1,\dots,x_N)$ as $x=(x_i,x_{-i})$.

For $k\geq 0$, let $\Cc^{k}(\Xx_i)$ be the space of $k$-times continuously differentiable functions over $\Xx_i$ (that is, functions defined on $\Xx_i$ that admit a $\Cc^k$ extension on $\RR^d$) endowed with the usual supremum norm. Using the multi-index notation, this norm is defined as
\(
\Vert f\Vert_{\Cc^k} \coloneqq \inf_{\tilde f}\sup_{\vert \alpha\vert \leq k} \Vert \tilde f^{(\alpha)}\Vert_\infty
\)
where the infimum is over functions $\tilde f$ that are extensions of $f$ defined on $\RR^d$. Endowed with this norm, $\Cc^k(\Xx_i)$ is a Banach space, see~\cite[Chap.~8, II]{queffelec2020analyse} for details.

We denote by $\Pp(\Xx_i)$ the space of Borel probability measures on $\Xx_i$, which we endow with the weak topology, characterized by its convergent sequences as $\mu_n \rightharpoonup \mu \Leftrightarrow \int \phi\d\mu_n\to \int \phi\d\mu$ for all $\phi\in \Cc^0(\Xx_i)$. 
Given $(\mu_1, \dots, \mu_N) \in \prod_{i=1}^N \Pp(\Xx_i)$, we denote by $\bmu$ the $N$-tuple $(\mu_1, \dots, \mu_N)$ and by $\mu$ the product measure $\otimes_{i=1}^N \mu_i \in \Pp(\Xx)$. For $\mu = \otimes_{i=1}^N \mu_i \in \Pp(\Xx)$ we let $ \mu_{-i} \coloneqq \otimes_{j\neq i}\mu_j$
and $\prod_{i=1}^N \Pp(\Xx_i)$ is endowed with the product Wasserstein  $\bW$ metric, given, for $\bmu =(\mu_1, \dots, \mu_N),\, \boldsymbol{\nu} =(\nu_1, \dots, \nu_N) \in \prod_{i=1}^N \Pp(\Xx_i)$, by
 \[
 \bW(\bmu,\boldsymbol{\nu}) \coloneqq \left( \sum_{i=1}^N W_2(\mu_i,\nu_i)^2 \right)^{\frac12}.
 \]
 
In the following, $k\in \NN^*$ is arbitrary and always denotes the regularity of the output space $\tilde \Cc^k$ of the Schr\"odinger map $S$, while the regularity we require for the cost function $c$ varies across statements.

In the proofs, we use $C,C',\dots$ to denote positive constants that may change from line to line and only depend on general characteristics of the problem such as $N$ and other quantities that are specified when needed.

\subsection{Multi-marginal Schr\"odinger System}

The multi-marginal Schr\"odinger system arises as the optimality conditions for the multi-marginal \emph{entropic optimal transport} problem, defined for $\bmu \in \prod_{i=1}^N\Pp(\Xx_i)$ by
\begin{align}
E(\bmu) \coloneqq \min_{\gamma\in \Pi(\bmu)} \int_\Xx c(x)\d\gamma(x) + H(\gamma|\mu).
\end{align}
where $\Pi(\bmu)$ is the set of probability measures on $\Xx$ having marginals $(\mu_1,\dots,\mu_N)$. This convex problem admits a concave dual formulation in terms of the Lagrange multipliers for the marginal constraints

\begin{align}\label{eq:dual-MEOT}
E(\bmu)=\max_{\bphi \in \prod_{i=1}^N \Cc^0(\Xx^i)} \sum_{i=1}^N \int_{\Xx_i} \phi_i(x_i)\d\mu_i(x_i) + 1 - \int_\Xx e^{\sum_{i=1}^N \phi_i(x_i)-c(x)}\d\mu(x).
\end{align}

Notice that these problems are multi-marginal generalizations of those presented in Section~\ref{sec:introduction}. The primal-dual optimality conditions read

\begin{align}\label{eq:opt-primal-dual}
\gamma(\d x) = e^{\sum_{i=1}^N\phi_i(x_i)-c(x)}\mu(\d x).
\end{align}

We refer to~\cite{leonard2012schrodinger, nutz2021introduction} for the basic theory of entropic optimal transport and~\cite{carlier2020differential, marino2020optimal} for the multi-marginal theory. The optimality conditions for Eq.~\eqref{eq:dual-MEOT} coincide with the condition that $\gamma\in \Pi(\bmu)$ in Eq.~\eqref{eq:opt-primal-dual}, and lead to the Schr\"odinger system.

\begin{definition}[Schr\"odinger system/potentials/map]\label{def:schrodinger}
Consider the map $T: \prod_{i=1}^N\Cc(\Xx_i)\times \prod_{i=1}^N\Pp(\Xx_i)\to \prod_{i=1}^N\Cc(\Xx_i)$ defined for $i\in [N]$  and $x_i\in \Xx_i$ as
\begin{align}
T_i(\bphi,\bmu)(x_i) &\coloneqq \log \Big( \int_{\Xx_{-i}} e^{\sum_{j=1}^N\phi_j(x_j)-c(x_i,x_{-i})}\d\mu_{-i}(x_{-i})\Big).
\end{align}
A function $\bphi=(\phi_1, \dots, \phi_N)$ is called a \emph{Schr\"odinger potential} associated to $\bmu$ if it solves the \emph{Schr\"odinger system}
\begin{align}\label{eq:schrodinger-system}
T(\bphi,\bmu) = 0.
\end{align}
The \emph{Schr\"odinger map} is the function $S$ that maps $\bmu$ to its Schr\"odinger potential $\bphi$, i.e.~that satisfies $T(S(\bmu),\bmu)=0$ (Prop.~\ref{prop:preliminaries} states that this map is well-defined in a suitable sense).
\end{definition}

Let us stress that we require Eq.~\eqref{eq:schrodinger-system} to hold in the space of continuous functions, that is for \emph{every} $x\in \Xx$, rather than only $\mu$-a.e.~which is the optimality condition of Eq.~\eqref{eq:dual-MEOT}. 

Clearly, if $\bphi=(\phi_1,\dots,\phi_N)$ solves~\eqref{eq:schrodinger-system} for some fixed $\bmu$, then so does every family of potentials of the form $(\phi_1+\kappa_1,\dots,\phi_N+\kappa_N)$ where the $\kappa \in \RR^{N}$ satisfies  $\sum_{i=1}^N \kappa_i=0$. This defines an equivalence relation $\sim$ and we define the quotient space $$\tilde \Cc^k \coloneqq \Big(\prod_{i=1}^N \Cc^k(\Xx_i)\Big)/\sim.$$ Endowed with the quotient norm (the infimum of the norm over all representatives in the equivalence class), $\tilde \Cc^k$ is a Banach space.

\subsection{Existence and weak continuity of the Schr\"odinger map}

Let us state some preliminary results about the Schr\"odinger map. 

\begin{proposition}\label{prop:preliminaries}
If $c\in \Cc^k(\Xx)$ for $k\in \NN^*$ then for any $\bmu \in \prod_{i=1}^N \Pp(\Xx_i)$, there exists a unique $\bphi=\bphi^{\bmu} \in \tilde \Cc^k$ such that $T(\bphi,\bmu)=0$, i.e.~the Schr\"odinger map $S:\bmu\mapsto \bphi^{\bmu}$ is well-defined. Moreover, 
\begin{itemize}
    \item[(i)] for $i\in [N]$, $\phi^{\bmu}_i$ is $L_i$-Lipschitz continuous, where $L_i=\sup_{x\in \Xx} \Vert \nabla_{x_i} c(x)\Vert_2$,
    \item[(ii)] the Schr\"odinger map $S:\prod_{i=1}^N \Pp(\Xx_i)\to \tilde \Cc^k$ is weakly continuous and is bounded.
    \end{itemize}
\end{proposition}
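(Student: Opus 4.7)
The plan breaks into four components: existence in $\Cc^k$, uniqueness, the Lipschitz/boundedness estimates, and weak continuity.

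For \textbf{existence}, I would start from the dual problem~\eqref{eq:dual-MEOT}: by standard direct-method arguments (coercivity modulo the trivial invariance, plus compactness of marginals), it admits maximizers $(\phi_1,\dots,\phi_N)\in\prod_i L^\infty(\mu_i)$. The first-order conditions give $T_i(\bphi,\bmu)=0$ for $\mu_i$-a.e.\ $x_i$. Now the right-hand side of $T_i$ depends on $x_i$ only through $c(x_i,\cdot)$, so $-T_i(\bphi,\bmu)$ is automatically a $\Cc^k$ function of $x_i$ (by dominated convergence under the integral, using that $c$ is $\Cc^k$ and $\Xx$ is compact). Redefining $\phi_i := -T_i(\bphi,\bmu)$ for each $i$ produces a $\Cc^k$ representative that solves the system \emph{everywhere}. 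For part~\textbf{(i)}, I would differentiate this identity in $x_i$ to obtain
\[
\nabla \phi_i(x_i) \;=\; \int_{\Xx_{-i}} \nabla_{x_i} c(x_i,x_{-i})\, \pi_i(\d x_{-i}\mid x_i),
\]
where $\pi_i(\cdot\mid x_i)$ is a probability kernel (the normalized exponential), which immediately yields $\Vert\nabla \phi_i\Vert_\infty \leq L_i$.

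For \textbf{uniqueness} in $\tilde \Cc^k$, suppose $\bphi$ and $\bpsi$ both solve $T(\cdot,\bmu)=0$ everywhere. Setting $\Delta_i=\phi_i-\psi_i$ and subtracting the two fixed-point identities shows that, at any $x_{i^\star}$ achieving $\max_i\max_{x_i}\Delta_i$, Jensen's inequality in the log forces all $\Delta_j$ to equal the same constant at the corresponding points of $\spt \mu_j$; propagating this along the system and using the continuous extensions gives the compatibility $\sum_i \kappa_i = 0$ and hence $\bphi\sim\bpsi$. This is essentially a maximum-principle argument on the Schrödinger system. For \textbf{boundedness}, one picks a canonical representative in $\tilde\Cc^k$ (e.g., $\min \phi_i = 0$ for $i\leq N-1$), and the Lipschitz bound combined with $\diam(\Xx_i)<\infty$ controls the oscillation; the remaining potential $\phi_N$ is then pinned by one evaluation of the Schrödinger identity.

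For \textbf{weak continuity}, take $\bmu^n\rightharpoonup \bmu$, let $\bphi^n=S(\bmu^n)$ with the normalized representative above. The family is uniformly Lipschitz by~(i) and uniformly bounded by the boundedness step, so Arzelà--Ascoli gives a subsequence converging in $\Cc^0$ norm to some $\bphi^\infty$. To pass to the limit in the identity $T_i(\bphi^n,\bmu^n)=0$, I would combine uniform convergence of $\bphi^n$, the weak convergence of $\mu^n_{-i}$, and dominated convergence to conclude $T_i(\bphi^\infty,\bmu)=0$; by uniqueness, $\bphi^\infty = S(\bmu)$, and a standard subsequence argument upgrades to the full sequence. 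To upgrade convergence from $\tilde\Cc^0$ to $\tilde\Cc^k$, I would induct on the order of differentiation: differentiating the fixed-point identity $\alpha$ times in $x_i$ expresses $\partial^\alpha \phi_i$ as a polynomial in derivatives of $c$ and of $\phi_j$ ($j\neq i$) of order $\leq|\alpha|-1$, integrated against the kernel $\pi_i^n(\cdot\mid x_i)$. Thus uniform bounds and uniform continuous dependence propagate upward.

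The main obstacle is the last step — transferring the Arzelà--Ascoli convergence from $\Cc^0$ up through all $k$ derivatives. The cleanest route is the inductive identity just described, because it reduces higher-order control to a continuous functional of quantities already known to converge at the previous level; all the estimates then descend from uniform bounds on $\|c\|_{\Cc^k}$ and on lower-order norms of $\bphi^n$ established at earlier stages.
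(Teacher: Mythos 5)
Most of your outline matches the paper's route: you lift the $L^\infty$ solution to a genuine $\Cc^k$ solution through the fixed-point identity, you get (i) by differentiating that identity against the normalized kernel, and you get weak continuity by uniform Lipschitz bounds plus Arzel\`a--Ascoli plus uniqueness of the limit, with the $\tilde\Cc^0\to\tilde\Cc^k$ upgrade coming from re-plugging into the Schr\"odinger identity. Two small points first. When you write ``redefine $\phi_i := -T_i(\bphi,\bmu)$'' you must mean $-\bar T_i$ (the part of $T_i$ with $\phi_i$ factored out, $T_i=\Id_i+\bar T_i$); $T_i$ itself contains the raw $L^\infty$ term $\phi_i(x_i)$ and is therefore \emph{not} automatically $\Cc^k$. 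Your boundedness argument (normalize $N-1$ potentials by their minima, use the Lipschitz bound and compactness, then pin the last one through the identity) works and is a valid alternative to the paper's one-liner (weak continuity on a weakly compact set).

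The genuine gap is in your uniqueness step. Your ``maximum principle'' sketch---take the global max of $\Delta_i=\phi_i-\psi_i$, invoke Jensen, force $\sum_{j\ne i^\star}\Delta_j$ to be constant, propagate---is the Birkhoff/Hilbert-metric contraction argument. It closes cleanly for $N=2$, where swapping $\bphi\leftrightarrow\bpsi$ gives the matching lower bound $-m_j\le M_i$ that forces equality in Jensen. For $N\ge 3$ it does not close: you only get $M_{i^\star}\le-\sum_{j\ne i^\star}m_j$ and the symmetric family $-m_{i}\le\sum_{j\ne i}M_j$, and these inequalities are compatible without the $\Delta_j$ being constant, so you never reach the Jensen-equality case you assert. (Indeed the paper explicitly comments, when discussing \cite{deligiannidis2021quantitative}, that the Hilbert-metric approach is ``a well-known limitation'' for multi-marginal.) The paper instead simply cites \cite{carlier2020differential,marino2020optimal} for $L^\infty$-uniqueness, whose argument is of a different nature (essentially the same $L^2(Q)$ quadratic-form identity that appears in Step~1 of Lemma~\ref{lem:inverse-function} for the \emph{linearized} operator, combined with convex-duality considerations). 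If you want a self-contained proof, replace the maximum principle with that $L^2$ argument, or with the observation that any two dual maximizers must have the same sum $\sum_i\phi_i$ on $\spt\mu$ by strict convexity of $u\mapsto\int e^{u-c}\d\mu$ in $u=\sum\phi_i$, and then deduce from $\sum_i(\phi_i-\psi_i)(x_i)\equiv 0$ on a product set that each difference is a constant.
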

The continuity claim $(ii)$ is not needed in the sequel -- and is weaker than Thm.~\ref{thm:main} -- but it is instructive to recall this known result that can be obtained by elementary means, before delving into more technical proofs.
\begin{proof}
The existence of a unique solution to Eq.~\eqref{eq:schrodinger-system} in $\prod_{i=1}^N L^\infty(\mu_i)/\sim$, i.e.~in the $\mu$-almost-everywhere sense, is proved in~\cite{carlier2020differential}, see also~\cite{marino2020optimal}. In order to prove the same in $\tilde \Cc^0$, i.e.~in the everywhere sense, let us observe that $T$ can be expressed as $T=\Id+\bar T$ with
\begin{align}\label{eq:Tbar}
\bar T_i(\bphi,\bmu)(x_i) = \log \Big( \int_{\Xx_{-i}} e^{\sum_{j\neq i}\phi_j(x_j)-c(x_i,x_{-i})}\d\mu_{-i}(x_{-i})\Big)
\end{align}
by factorizing $e^{\phi_i(x_i)}$ out in the definition of $T_i$. Thus, given a representer of the $L^\infty$ solution $\bphi^{L^\infty}\in \prod_{i=1}^N L^\infty(\Xx_i)$, one can \emph{define} a solution $\bphi^{\Cc^k} \in \prod_{i=1}^N \Cc^k(\Xx_i)$ in the ``everywhere sense'' by setting
\begin{align}\label{eq:splitting}
\phi_i^{\Cc^k}(x_i) \coloneqq -\log \Big( \int_{\Xx_{-i}} e^{\sum_{j\neq i}\phi^{L^\infty}_j(x_j)-c(x_i,x_{-i})}\d\mu_{-i}(x_{-i})\Big) = - \bar T_i(\bphi^{L^\infty},\bmu)(x_i) .
\end{align}
Observe that $\bphi^{\Cc^k}$ inherits the $\Cc^k$ regularity of $c$.
Moreover, by uniqueness in $L^\infty$, any ``everywhere'' solution must coincide $\mu$-a.e. with an ``almost everywhere'' solution and is thus of the form given by Eq.~\eqref{eq:splitting}. Noticing that $\bar T(\bphi+\kappa,\bmu) =  \bar T(\bphi,\bmu)-\kappa$ for any family of constants $\kappa\in \RR^N$ such that $\sum_{i=1}^N \kappa_i=0$ and quotienting by $\sim$, it follows that there exists a unique solution of the Schr\"odinger system~\eqref{eq:schrodinger-system} in $\tilde \Cc^k$.

The Lipschitz continuity constant of $x_i\mapsto \phi^{\bmu}_i(x_i)$ can be bounded by observing that differentiating $\bphi^{\bmu} = -\bar T(\bphi^{\bmu},\bmu)$ in $x_i$ gives
\begin{align*}
    \nabla \phi_i^{\bmu}(x_i) =\int_{\Xx_{-i}} \nabla_{x_i} c(x_i,x_{-i})\d Q_{-i}(x_{-i}|x_i) 
\end{align*}
where $Q_{-i}(\cdot|x_i)\in \Pp(\Xx_{-i})$ is a probability measure whose expression is given later in Eq.~\eqref{eq:def-Q}.

Finally, to prove weak continuity of $S$, it is enough to prove that if $\bmu^n$ is a sequence weakly converging to $\bmu$, then $\bphi^{\bmu^n}$ converges to $\bphi^{\bmu}$. By the previous point, $\{ \bphi^{\bmu^n}\}_n$ is uniformly Lipschitz continuous and one can choose a uniformly bounded sequence of representatives so by Ascoli-Arzel\`a Theorem we can extract a subsequence $\bphi^{m_n}$ which converges in $\tilde \Cc^0$ to some $\bphi^\infty$. Since the map $T$ is jointly continuous, it follows that $T(\bphi^\infty,\bmu)=\lim_m T(\bphi^{\bmu^{m_n}},\bmu^{m_n})=0$, hence $\bphi^\infty=\bphi^{\bmu}$. But this limit is unique, so the full sequence $(\bphi^{\bmu^n})_n$ converges to $\bphi^{\bmu}$ which proves the weak continuity in $\tilde \Cc^0$. Since one also has $\bphi^{\bmu^n} = -\bar T(\bphi^{\bmu^n},\bmu^n)$ for all $n$ and $\bar T$ is continuous  as a function $\tilde \Cc^0\times \prod_{i=1}^N \Pp(\Xx_i)\to\tilde \Cc^k$, we have in fact that $\bphi^{\bmu^n}$ converges to $\bphi^{\bmu}$ in $\tilde \Cc^k$. Boundedness of $S$ finally follows from the fact that it is weakly continuous on a weakly compact set.
\end{proof}

\subsection{Main result : regularity of the Schr\"odinger map}\label{sec-main-result}
In order to study regularity beyond the zero-th order, we bypass the lack of differentiable structure of $\Pp(\Xx)$ by considering parametrized paths generated by transport plans.

Consider $\bmu^0 = (\mu^0_i)_{i=1}^N$ and $\bmu^1 = (\mu^1_i)_{i=1}^N$ two families of probability measures in $\prod_{i=1}^N\Pp(\Xx_i)$, and a family of transport plans\footnote{To be clear, we \emph{do not} assume that $\gamma_i$ is an \emph{optimal} transport plan.} $\gamma = (\gamma_i)_{i=1}^N $ such that $\gamma_i \in \Pp(\Xx_i\times \Xx_i)$ has marginals $\mu^0_i$ and $\mu^1_i$.
These transport plans define interpolations between $\mu^0_i$ and $\mu^1_i$, defined for $t\in [0,1]$ as
\begin{align}\label{eq:mu-t}
\mu^t_i = ((1-t)\pi^1 + t\pi^2)_\# \gamma_i
\end{align}
where $\pi^1$ (resp.~$\pi^2$) is the projection on the first (resp.~second) factor of $\Xx_i\times \Xx_i$. In other terms, $\mu^t_i$ is characterized by
\[
\int_{\Xx_i} \varphi_i (x_i) \d\mu^t_i(x_i) =  \int_{\Xx_i\times\Xx_i} \varphi_i((1-t)x_i+ty_i)\d\gamma_i(x_i,y_i),
\qquad \forall \varphi_i \in \Cc(\Xx_i).
\]
Our main result  is as follows.

\begin{theorem}\label{thm:main}
For $p,k\in\NN^*$, $p\leq k$, if $c\in \Cc^{k+p}(\Xx)$ then the parametrized Schr\"odinger map  $t\mapsto \bphi^t\coloneqq S(\bmu^t)$ belongs to $\Cc^{p}([0,1];\tilde \Cc^k)$. Moreover, there exists $C>0$ that only depends on $\Vert c\Vert_{\Cc^{k+1}}$ and $N$ such that
\[
\Vert \bphi^t-\bphi^s\Vert_{\tilde \Cc^k} \leq C \vert t-s\vert \sqrt{\cost(\gamma)}
\]
where $\cost(\gamma)\coloneqq \sum_{i=1}^N \int_{\Xx_i\times \Xx_i} \Vert y_i-x_i\Vert^2\d\gamma_i(x_i,y_i)$ is the $L^2$-transport cost associated with $\gamma$.
\end{theorem}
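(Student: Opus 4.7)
The strategy is to apply the implicit function theorem (IFT) to the Schr\"odinger system after rewriting it with the transport plans $\gamma_i$ as fixed parameters. Using $\mu^t_j=((1-t)\pi^1+t\pi^2)_\#\gamma_j$ and denoting $z^t_j(u_j,v_j)\coloneqq (1-t)u_j+tv_j$, $\gamma_{-i}\coloneqq \otimes_{j\neq i}\gamma_j$, the operator $\bar T$ of Eq.~\eqref{eq:Tbar} rewrites as
\[
\bar T_i(\bphi,\bmu^t)(x_i)=\log\int_{\Xx_{-i}\times\Xx_{-i}}\!\exp\!\Big(\sum_{j\neq i}\phi_j(z^t_j)-c\bigl(x_i,(z^t_j)_{j\neq i}\bigr)\Big)\d\gamma_{-i}(u,v).
\]
The parametrized Schr\"odinger equation then reads $F(\bphi,t)=0$ with $F(\bphi,t)\coloneqq \bphi+\bar T(\bphi,\bmu^t)$ seen as a map $\tilde \Cc^k\times[0,1]\to \tilde \Cc^k$. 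From the explicit formula, differentiating $F$ in $t$ consumes one derivative of $c$ in the interior variables and one derivative of $\bphi$ (via chain rule through $z^t_j$), while differentiating in $x_i$ only touches $c$; this is precisely why the assumption $c\in \Cc^{k+p}$ together with $p\leq k$ makes $F$ a $\Cc^p$ map.

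\textbf{Invertibility of the linearization.} The partial derivative $D_\bphi F(\bphi^t,t)=\Id+D_\bphi\bar T(\bphi^t,\bmu^t)$ has the explicit form
\[
D_\bphi\bar T_i(\bphi,\bmu)[\boldsymbol{\psi}](x_i)=\sum_{j\neq i}\int_{\Xx_j}\psi_j(x_j)\d Q_j(x_j\vert x_i),
\]
with $Q_j(\cdot\vert x_i)\in \Pp(\Xx_j)$ the conditional marginal appearing in the proof of Prop.~\ref{prop:preliminaries}. The key structural step, which I expect to be established as a dedicated lemma in Section~\ref{sec:proofs}, is that this operator is a Banach-space isomorphism on $\tilde \Cc^k$ with the norm of its inverse bounded in terms of $\|c\|_{\Cc^{k+1}}$ and $N$. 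A natural route is to first prove invertibility on $\tilde \Cc^0$ via a Doeblin/spectral-gap argument (on a compact set, the kernels $Q_j$ have densities uniformly bounded below in terms of $\mathrm{osc}(c)$ and $\mathrm{osc}(\bphi)$, which themselves are controlled by $\|c\|_{\Cc^1}$), and then bootstrap to $\tilde \Cc^k$ by commuting spatial derivatives through the fixed-point identity $\bphi=-\bar T(\bphi,\bmu)$, each additional derivative consuming one derivative of $c$ and no more.

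\textbf{Conclusion.} Granting the above isomorphism claim, IFT yields $t\mapsto \bphi^t\in \Cc^p([0,1];\tilde \Cc^k)$ and
\[
\dot\bphi^t=-\bigl(\Id+D_\bphi\bar T(\bphi^t,\bmu^t)\bigr)^{-1}\partial_t\bar T(\bphi^t,\bmu^t).
\]
Differentiating the push-forward expression for $\bar T_i$ in $t$ and factoring out the weighted exponential gives
\[
\partial_t\bar T_i(\bphi,\bmu^t)(x_i)=\int_{\Xx_{-i}^2}\sum_{j\neq i}(v_j-u_j)\cdot\bigl(\nabla\phi_j(z^t_j)-\nabla_{x_j}c(x_i,z^t_{-i})\bigr)\d\Gamma^t(u,v\vert x_i),
\]
where $\Gamma^t(\cdot\vert x_i)$ has density bounded above w.r.t.~$\gamma_{-i}$ by $e^{\mathrm{osc}(c)+\mathrm{osc}(\bphi)}$. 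Spatial derivatives in $x_i$ up to order $k$ only hit $c$, costing at most one extra derivative (so $\|c\|_{\Cc^{k+1}}$ controls everything); Cauchy--Schwarz in $(u,v)$ together with the tensor structure of $\gamma_{-i}$ then yields $\|\partial_t\bar T(\bphi^t,\bmu^t)\|_{\tilde \Cc^k}\leq C(\|c\|_{\Cc^{k+1}},N)\sqrt{\cost(\gamma)}$. Combined with the uniform bound on the inverse of the linearization, this gives $\|\dot\bphi^t\|_{\tilde \Cc^k}\leq C\sqrt{\cost(\gamma)}$, and integration in $t$ yields the stated Lipschitz estimate.

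\textbf{Main obstacle.} The decisive and least routine step is the quantitative invertibility of $\Id+D_\bphi\bar T$ on $\tilde \Cc^k$ with an inverse bound depending only on $\|c\|_{\Cc^{k+1}}$ and $N$ (independent of the measures $\bmu$). As recalled in the introduction, the Hilbert-metric contraction of Sinkhorn used in~\cite{deligiannidis2021quantitative} is known not to extend to the multi-marginal setting $N\geq 3$, so a dedicated multi-marginal argument is required; once this lemma is in place, everything else in the proof above is a matter of careful differentiation and Cauchy--Schwarz.
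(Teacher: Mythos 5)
Your overall architecture coincides with the paper's: apply the implicit function theorem to $G(\bphi,t)=T(\bphi,\bmu^t)$ on $\tilde\Cc^k\times[0,1]$, compute $\partial_t G$ explicitly, control it by $\sqrt{\cost(\gamma)}$ via Cauchy--Schwarz in the reweighted product plan, and combine with a uniform bound on $(D_\bphi G)^{-1}$. Your formulas for $D_\bphi\bar T$ and $\partial_t\bar T$ agree with the paper's (Eqs.~\eqref{eq:def-q}--\eqref{eq:def-Q} and Eq.~\eqref{eq:dtG}), and your bootstrap of the inverse estimate from $\Cc^0$ to $\Cc^k$ through the fixed-point identity is exactly Step~3 of Lem.~\ref{lem:inverse-function}.

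The genuine gap is the base case of the invertibility lemma, which you acknowledge you cannot fill. You propose a ``Doeblin/spectral-gap'' route and note, correctly, that the Hilbert-metric contraction fails for $N\geq 3$; but this conflates two distinct questions. The Hilbert-metric argument controls the contraction factor of the \emph{nonlinear} Sinkhorn iteration, whereas what is needed here is injectivity and a quantitative inverse bound for the \emph{linear} operator $\Id + L$ on the quotient space, and this does not require any Markov-chain or ergodicity machinery. In Lem.~\ref{lem:inverse-function}, $L$ is shown to be compact on $\tilde\Cc^k$ (Arzel\`a--Ascoli), so by the Fredholm alternative it suffices to prove injectivity, which follows from the algebraic identity
\[
\sum_{i=1}^N\int_{\Xx_i}h_i\bigl(h_i+L_i(\bh)\bigr)\d Q_i=\int_\Xx\Bigl(\sum_{i=1}^N h_i(x_i)\Bigr)^2\d Q(x)
\]
combined with the norm comparison of Lem.~\ref{lem:norms} between $\Vert\bh\Vert_{\tilde L^2_{\bmu}}$ and $\bigl\Vert\oplus_i h_i\bigr\Vert_{L^2(\mu)}$. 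The quantitative inverse bound then comes from the same identity applied to $\bh+L(\bh)=\bg$ after normalizing the representatives of $\bg$ and $\bh$, together with the two-sided density bounds of Lem.~\ref{lem:bound-dQ/dmu}; the resulting constant depends only on $N$, $\Vert c\Vert_{\Cc^k}$ and $\Vert\bphi\Vert_{\tilde\Cc^0}$ (the extra derivative of $c$ is consumed only in the $\partial_t G$ estimate, not in the inverse). Without this $L^2$ argument, your proof is incomplete at precisely the step you flag as the main obstacle.
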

\
The detailed proof is postponed to the next section, the basic ingredient being the application of the Implicit Function Theorem to the map $G(\bphi, t):=T(\bphi, \bmu^t)$. We can make the following comments:
\begin{itemize}

\item Tracking the constants in the proof, it can be seen that $C$ depends exponentially on the oscillation of the cost $\sup_x c(x)-\inf_x c(x)$ and polynomially on $\Vert c\Vert_{\Cc^{k+1}}$.
\item From the primal-dual relation Eq.~\eqref{eq:opt-primal-dual}, one could easily deduce stability results for the primal variable $\gamma$ from this theorem. 
\item The fact that the map $t\mapsto S(\bmu^t)$ belongs to $\Cc^p([0,1];\tilde \Cc^k)$, also holds if $(\bmu^t)_{t\in [0,1]}$, instead of being of the form \eqref{eq:mu-t}, is of the form $\mu_i^t=\xi_i(\cdot,t)_\#\mu_i^0$ for $i\in [N]$, for some $\mu_i^0\in \Pp(\Xx_i)$ and a measurable $\xi_i(x_i,\cdot)\in \Cc^p([0,1];\Xx_i)$ with a $\Cc^p$ norm uniformly bounded in $x_i$. This can be seen by suitably adapting the proof of Lem~\ref{lem:regularity-of-G}.
\end{itemize}

Applying Thm.~\ref{thm:main} by choosing $\gamma_i$ as the optimal transport plan between $\mu_i$ and $\nu_i$ immediately leads to the following Lipschitz continuity result for the Schr\"odinger map.

\begin{corollary}\label{cor:lipschitz-multi}
For $k\in \NN^*$, assume that $c\in \Cc^{k+1}(\Xx)$.
The Schr\"odinger map $S:\prod_{i=1}^N  \Pp(\Xx_i) \to \tilde \Cc^k$ is Lipschitz continuous, i.e.~there exists $C>0$ such that, for all $\bmu,\bnu \in \prod_{i=1}^N\Pp(\Xx_i)$, letting $(\bphi^{\bmu},\bphi^{\bnu})=(S(\bmu),S(\bnu))$,
\[
\Vert \bphi^{\bmu} - \bphi^{\bnu}\Vert_{\tilde \Cc^k} \leq C \bW(\bmu,\bnu).
\]
\end{corollary}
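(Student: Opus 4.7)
The plan is to derive this corollary as a direct specialization of Theorem~\ref{thm:main}, following the strategy already hinted at in the paragraph preceding the statement: interpolate between $\bmu$ and $\bnu$ via optimal transport plans component by component, and then invoke the Lipschitz-in-$t$ bound of the theorem.

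Concretely, for each $i\in[N]$, I would pick $\gamma_i\in\Pi(\mu_i,\nu_i)$ to be an optimal transport plan for the quadratic cost between $\mu_i$ and $\nu_i$, so that
\[
\int_{\Xx_i\times\Xx_i}\|y_i-x_i\|^2\dd\gamma_i(x_i,y_i)=W_2(\mu_i,\nu_i)^2.
\]
Existence of such a $\gamma_i$ is standard on the compact space $\Xx_i$. I then form the interpolating curve $\bmu^t=(\mu_1^t,\dots,\mu_N^t)$ as in Eq.~\eqref{eq:mu-t}, which has the boundary values $\bmu^0=\bmu$ and $\bmu^1=\bnu$. Since $c\in\Cc^{k+1}(\Xx)$ by assumption, Theorem~\ref{thm:main} (applied with the given $k$ and with $p=1$ for the Lipschitz estimate) provides a constant $C>0$ depending only on $\|c\|_{\Cc^{k+1}}$ and $N$ such that
\[
\|S(\bmu)-S(\bnu)\|_{\tilde\Cc^k}=\|\bphi^0-\bphi^1\|_{\tilde\Cc^k}\leq C\sqrt{\cost(\gamma)}.
\]
Finally, by the choice of the $\gamma_i$ as optimal plans,
\[
\cost(\gamma)=\sum_{i=1}^N\int_{\Xx_i\times\Xx_i}\|y_i-x_i\|^2\dd\gamma_i=\sum_{i=1}^N W_2(\mu_i,\nu_i)^2=\bW(\bmu,\bnu)^2,
\]
which yields the claimed bound $\|\bphi^{\bmu}-\bphi^{\bnu}\|_{\tilde\Cc^k}\leq C\,\bW(\bmu,\bnu)$.

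There is essentially no obstacle here, since all the heavy lifting (the implicit function theorem applied to $G(\bphi,t)=T(\bphi,\bmu^t)$ and the quantitative control of the $t$-derivative of $\bphi^t$) is already contained in Theorem~\ref{thm:main}. The only point worth mentioning is that the Lipschitz constant is independent of the particular choice of optimal transport plans $\gamma_i$, because the bound in Theorem~\ref{thm:main} depends on $\gamma$ only through $\cost(\gamma)$, which by optimality coincides with $\bW(\bmu,\bnu)^2$ for any selection of optimal couplings.
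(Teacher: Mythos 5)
Your proof is correct and is exactly the argument the paper intends: choose each $\gamma_i$ to be a $W_2$-optimal coupling between $\mu_i$ and $\nu_i$, interpolate via Eq.~\eqref{eq:mu-t}, and apply Theorem~\ref{thm:main} with $p=1$ so that $\cost(\gamma)=\bW(\bmu,\bnu)^2$. Nothing further is needed.
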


\rev{Our approach will also enable us to deduce a control of the  Schr\"odinger potentials and their derivatives in terms of negative Sobolev distances between the marginals  (see paragraph \ref{proofsobneg} for detailed definitions):

\begin{proposition}\label{cor:lipschitz-sobolev-multi}
Assume that $c\in \Cc^{k+p}(\Xx)$ with $p>d/2$ and $p\in \mathbb{N}^*$.
The Schr\"odinger map $S:\prod_{i=1}^N  \Pp(\Xx_i) \to \tilde \Cc^k$ is Lipschitz continuous in the negative Sobolev norm $H^{-p}$, i.e.~there exists $C>0$ such that, for all $\bmu,\bnu \in \prod_{i=1}^N\Pp(\Xx_i)$, letting $(\bphi^{\bmu},\bphi^{\bnu})=(S(\bmu),S(\bnu))$,
\[
\Vert \bphi^{\bmu} - \bphi^{\bnu}\Vert_{\tilde \Cc^k} \leq C \Vert \bmu-\bnu \Vert_{H^{-p}}.
\]
\end{proposition}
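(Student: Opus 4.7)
The plan is to adapt the proof of Theorem \ref{thm:main} by replacing the displacement interpolation with the linear convex interpolation $\bmu^t := (1-t)\bmu + t\bnu$, which remains in $\prod_{i=1}^N \Pp(\Xx_i)$ by convexity. The Implicit Function Theorem machinery underlying Theorem \ref{thm:main} still applies to $G(\bphi, t) := T(\bphi, \bmu^t)$ and gives $\partial_t \bphi^t = -(\partial_\bphi T)^{-1}[\partial_\bmu T \cdot (\bnu - \bmu)]$, where the inverse operator is bounded on $\tilde \Cc^k$ with a constant depending only on $\|c\|_{\Cc^{k+1}}$. The task therefore reduces to estimating the $\Cc^k$ norm of $\partial_\bmu T \cdot (\bnu - \bmu)$ in terms of $\|\bmu - \bnu\|_{H^{-p}}$.

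A direct differentiation of $T_i$ at fixed $\bphi = \bphi^t$ shows that the $i$-th component takes the form
\[
\sum_{\ell \neq i} \int_{\Xx_\ell} H^t_{i,\ell}(x_i, x_\ell) \, d(\nu_\ell - \mu_\ell)(x_\ell),
\]
where $H^t_{i,\ell}(x_i, x_\ell)$ is a ratio of two Gibbs-type integrals involving $e^{\sum_{j \neq i} \phi^t_j(x_j) - c(x_i, x_{-i})}$ and factors of $\bmu^t_{-i}$. Since $c \in \Cc^{k+p}$, a $\Cc^{k+p}$-valued variant of Proposition \ref{prop:preliminaries} yields $\bphi^t \in \tilde \Cc^{k+p}$, from which $H^t_{i,\ell}$ is jointly $\Cc^{k+p}$ in $(x_i, x_\ell)$, with bounds uniform in $t$ depending only on $\|c\|_{\Cc^{k+p}}$.

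For each multi-index $\alpha$ with $|\alpha| \leq k$, the function $x_\ell \mapsto \partial_{x_i}^\alpha H^t_{i,\ell}(x_i, x_\ell)$ then lies in $\Cc^p(\Xx_\ell)$ with norm uniformly bounded in $x_i \in \Xx_i$ and $t \in [0,1]$. The Sobolev embedding $\Cc^p(\Xx_\ell) \hookrightarrow H^p(\Xx_\ell)$ on the compact domain, combined with the $(H^p, H^{-p})$ duality (well-posed because $p > d/2$ implies $H^p \hookrightarrow \Cc^0$, so that $\mu_\ell - \nu_\ell \in H^{-p}$ with finite norm), gives
\[
\left| \partial_{x_i}^\alpha \int_{\Xx_\ell} H^t_{i,\ell}(x_i, x_\ell) \, d(\nu_\ell - \mu_\ell)(x_\ell) \right| \leq C \|\nu_\ell - \mu_\ell\|_{H^{-p}}
\]
uniformly in $x_i$ and $t$. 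Summing over $i$ and $\ell$, applying the bounded inverse $(\partial_\bphi T)^{-1}$, and integrating $\|\partial_t \bphi^t\|_{\tilde \Cc^k}$ over $t \in [0,1]$ closes the argument.

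I expect the main obstacle to be the uniform-in-$t$ $\Cc^{k+p}$ control of $\bphi^t$ along the linear interpolation, since the explicit bound of Proposition \ref{prop:preliminaries} only covers first-order regularity. This should be handled by iteratively differentiating the fixed-point identity $\phi^t_i = -\bar T_i(\bphi^t, \bmu^t)$ from Eq.~\eqref{eq:splitting}: each additional derivative of $\phi^t_i$ is a polynomial expression in lower-order derivatives of $\bphi^t$ and of $c$, and the smoothing nature of $\bar T$ guarantees that the resulting sup-norm estimates depend purely on $\|c\|_{\Cc^{k+p}}$ and the oscillation of $c$.
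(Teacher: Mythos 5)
Your proposal is correct and follows essentially the same route as the paper's proof: linear interpolation $\bmu^t=\bmu+t(\bnu-\bmu)$, the same Implicit Function Theorem machinery as in Theorem~\ref{thm:main} with the uniform $\tilde\Cc^k$ bound on $[D_{\bphi}T]^{-1}$ from Lemma~\ref{lem:inverse-function}, the identification of $D_tG_i$ as a kernel integrated against $\bnu-\bmu$, and the $H^p$--$H^{-p}$ duality to close the estimate. The only small inaccuracy is in your closing remark: Proposition~\ref{prop:preliminaries}(ii) already gives uniform boundedness of $S$ in $\tilde\Cc^{k+p}$ when $c\in\Cc^{k+p}$ (the first-order bound is only in part~(i)), so the iterative bootstrap you sketch is unnecessary; moreover since $\phi_\ell^t$ enters the kernel only through $x_\ell$, uniform $\Cc^{p}$ control of $\bphi^t$ would in fact suffice.
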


Note that when $p>d/2+1$, by Morrey's Theorem (see Section~\ref{proofsobneg}) and our compactness assumption, there exists $C>0$ that only depends on $\Xx$ such that $\Vert \bmu-\bnu \Vert_{H^{-p}}\leq  C\cdot \boldsymbol{W_1}(\bmu,\bnu) \leq C\cdot \bW(\bmu,\bnu)$. Thus, the conclusion of Prop.~\ref{cor:lipschitz-sobolev-multi} is generally stronger than that of Cor.~\ref{cor:lipschitz-multi}, but this is at the expense of requiring more regularity on the cost function. 

For illustration purposes, let us explain how this inequality leads to nonasymptotic estimation guarantees for the Schr\"odinger potentials given random samples. In the two marginal case, this is essentially a known result, obtained via different means in~\cite{del2022improved,rigollet2022sample}. Specifically, suppose that $\hat \bmu$ is an empirical measure built by drawing $n$ independent samples from each of the measures $\bmu_i$, $i\in N$. Then, since $H^{p}$ is a Reproducible Kernel Hilbert Space with a bounded kernel for $p>d/2$ (by Morrey's Theorem again), Hoeffding's inequality shows that $\Vert \hat \bmu-\bmu \Vert_{H^{-p}}$ is bounded by $C\cdot n^{-1/2}\sqrt{\log(1/\delta)}$ with probability $1-\delta$ where here $C$ depends only on $\Xx$. By Prop.~\ref{cor:lipschitz-sobolev-multi}, this directly translates into a high-probability bound on $\Vert \bphi^{\bmu} - \bphi^{\hat \bmu}\Vert_{\tilde \Cc^k}$. It might also be worth mentioning that Proposition \ref{cor:lipschitz-sobolev-multi} is obtained by linearly interpolating the marginals, as such, it is a slight departure from the rest of the paper which essentially focuses on displacement interpolation.
}

\section{Proofs}\label{sec:proofs}
The main tool to prove Thm.~\ref{thm:main} is the Implicit Function Theorem. We will apply it to the function $G:\tilde \Cc^k \times [0,1]\to  \Cc^k$ defined as
\begin{align}\label{def: G}
G(\bphi,t) &\coloneqq T(\bphi,\bmu^t)
\end{align}
whose expression\footnote{the maps $G$ and $T$ take values in $\Cc^k$ but it will sometimes be convenient to compose them from the left with the canonical projection $\Cc^k \to \tilde \Cc^k$, slightly abusing notations, we will still denote by $G$ and $T$ these maps with values in $\tilde \Cc^k$.} is, using the convention $y_i\coloneqq x_i$,
\begin{align*}
G_i(\bphi,t)(x_i) = \phi_i(x_i)+\log \Big( \int_{\Xx_{-i} \times \Xx_{-i}} e^{\sum_{j\neq i} \phi_j((1-t)x_j+ty_j)-c((1-t)x+ty)} \d\gamma_{-i}(x_{-i},y_{-i})\Big).
\end{align*}
For this purpose, in the next sections, we study the \rev{properties of the maps $T$ and $G$}.

\subsection{Invertibility of the differential of $T$}
Let us fix $\bmu= ( \mu_1, \dots, \mu_N) \in \prod_{i=1}^N \Pp(\Xx_i)$ and $\mu=\otimes_{i=1}^N \mu_i\in \Pp(\Xx)$ and study the map $\bphi\mapsto T(\bphi,\bmu)$, which is a self-map of $\tilde \Cc^k$.
Note that $T$ is of class $C^\infty$ in the first variable and its differential is given, for $\bh\in \tilde \Cc^k$, by
\begin{align*}
D_\phi T_i(\bphi,\bmu)(\bh)(x_i) 
&=h_i(x_i) + \int_{\Xx_{-i}}\big(\sum_{j\neq i} h_j(x_j)\big)q_{-i}(x_{-i}|x_i)\d \mu_{-i}(x_{-i}).
\end{align*}
where we have introduced the function $q_{-i}$ defined, with the convention $x'_i=x_i$, by
\begin{align*}
q_{-i}(x_{-i}|x_i) &\coloneqq  \frac{e^{\sum_{j\neq i} \phi_j(x_j)-c(x)}}{\int_{\Xx_{-i}} e^{\sum_{j\neq i} \phi_j(x'_j)-c(x')}\d\mu_{-i}(x'_{-i})}.
\end{align*}
Note that $q_{-i}$ depends on $\bphi$ and $\bmu$ although this is not explicit in the notation. Similarly, let
 \begin{align}\label{eq:def-q}
 q (x) & \coloneqq \frac{e^{\sum_j \phi_j(x_j)-c(x)}}{\int_{\Xx} e^{\sum_{j} \phi_{j}(x_{j}')-c(x')}\d\mu(x')}, 
 & q_i (x_i) &\coloneqq \frac{\int_{\Xx_{-i}} e^{\sum_{j} \phi_j(x_j)-c(x)}\d\mu_{-i}(x_{-i})}{\int_{\Xx} e^{\sum_{j} \phi_{j}(x'_{j})-c(x')}\d\mu(x') }.
 \end{align}
Observe that if $\bphi$ and $c$ are of class $\Cc^k$ then the functions $q,q_i,q_{-i}$ are of class $\Cc^k$ as well. These functions are densities of probability densities in the sense that it holds
 \begin{align}\label{eq:def-Q}
 Q &\coloneqq q \mu \in \Pp(\Xx),& Q_i&\coloneqq q_i\mu_i \in \Pp(\Xx_i),& Q_{-i}(\cdot|x_i)&\coloneqq q_{-i}(\cdot|x_i)\mu_{-i} \in \Pp(\Xx_{-i}), \; \forall x_i\in \Xx_i.
 \end{align}
By construction,   for each $i$,  $Q_i$ is the $i$-th marginal of $Q$ on $\Xx_i$ and  $Q_{-i}$ is the disintegration of $Q$ with respect to this marginal, i.e.:
\begin{align}\label{eq:desint-Q}
 \d Q ( x_i ,\;  x_{-i}) =  \d Q_{-i} ( x_{-i} \vert x_i )  \d Q_i( x_i). 
 \end{align}
In the next lemma, we remark that these densities are uniformly bounded from above and below by positive quantities, a fact which we will often use in the following. 
\begin{lemma}\label{lem:bound-dQ/dmu}
Let $q$ be defined by \eqref{eq:def-q}. Then, for all $x \in \Xx$,
\[
e^{-2(N \Vert \bphi \Vert_{\tilde \Cc^0} + \Vert c \Vert_{\Cc^0})} \leq q(x) \leq e^{2(N \Vert \bphi \Vert_{\tilde \Cc^0} + \Vert c \Vert_{\Cc^0})}. 
\]
Moreover $q_i$ and $q_{-i}$ satisfy the same bounds.
\end{lemma}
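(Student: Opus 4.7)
The proof is by direct pointwise estimation. The key observation is that the densities $q$, $q_i$, $q_{-i}$ defined in Eq.~\eqref{eq:def-q} are invariant under the equivalence relation $\sim$: shifting each $\phi_j$ by a constant $\kappa_j$ with $\sum_j \kappa_j = 0$ leaves the numerators and denominators unchanged (a common factor appears in both). Hence these quantities depend only on the class $\bphi\in \tilde \Cc^0$, and we are free to work with any chosen representative. The plan is to pick a representative that (up to an arbitrarily small slack) realizes the quotient norm, so that the pointwise bounds $|\sum_j \phi_j(x_j)|$ are controlled by $\Vert \bphi\Vert_{\tilde\Cc^0}$ (with a possible loss of a factor $N$ when one prefers a $\max$-type bound over a sum-type bound).

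\textbf{Main steps.} First, I fix such a representative and bound the exponent of the numerator of $q(x)$: for every $x\in\Xx$,
\[
\Bigl|\sum_{j=1}^N \phi_j(x_j) - c(x)\Bigr| \;\leq\; \sum_{j=1}^N \Vert \phi_j\Vert_\infty + \Vert c\Vert_{\Cc^0} \;\leq\; N\Vert \bphi\Vert_{\tilde \Cc^0} + \Vert c\Vert_{\Cc^0} \;=:\; A,
\]
so $e^{\sum_j \phi_j(x_j)-c(x)}\in [e^{-A}, e^{A}]$. Second, the denominator $\int_\Xx e^{\sum_j \phi_j(x'_j)-c(x')}\d\mu(x')$ is the integral of the same expression against a probability measure, hence also lies in $[e^{-A}, e^{A}]$. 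Taking the ratio yields $q(x)\in [e^{-2A}, e^{2A}]$, which is the stated bound.

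\textbf{The cases of $q_i$ and $q_{-i}$.} For $q_{-i}(\cdot|x_i)$, the same argument applies verbatim with the exponent $\sum_{j\neq i}\phi_j(x_j)-c(x)$: one loses at most one term in the sum, so the bound $A$ (and hence the final $e^{\pm 2A}$ bounds) still holds, in fact with room to spare. For $q_i$, the simplest route is to use the relation $Q_i = q_i \mu_i$ exhibiting $q_i$ as the $\Xx_i$-marginal density of $Q = q\mu$, which, via the disintegration \eqref{eq:desint-Q}, gives $q_i(x_i) = \int_{\Xx_{-i}} q(x_i,x_{-i})\d\mu_{-i}(x_{-i})$. Since $q$ is bounded above and below by $e^{\pm 2A}$ and $\mu_{-i}$ is a probability measure, $q_i$ inherits the same bounds. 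Alternatively, one can apply the same numerator/denominator estimate directly to the explicit formula for $q_i$ in \eqref{eq:def-q}.

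\textbf{Expected obstacle.} There is no substantive obstacle; the only subtlety is cosmetic, namely correctly invoking the quotient norm via a near-optimal representative so that the bound is stated in terms of $\Vert \bphi\Vert_{\tilde\Cc^0}$ rather than of the $\Cc^0$ norm of an arbitrary representative. A limit $\epsilon\to 0$ in the choice of representative then removes the slack in the final inequality.
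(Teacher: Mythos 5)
Your proof is correct and uses essentially the same argument as the paper: bound the numerator and the denominator of $q$ separately by $e^{\pm A}$ with $A = N\Vert\bphi\Vert_{\tilde\Cc^0}+\Vert c\Vert_{\Cc^0}$, observing that the denominator is an integral against a probability measure and that $q$, $q_i$, $q_{-i}$ are invariant under the equivalence relation $\sim$, which lets one pass from the $\Cc^0$ norm of a representative to the quotient norm. The only (harmless) addition you make is the alternate route for $q_i$ via the marginal identity $q_i(x_i)=\int_{\Xx_{-i}}q(x_i,x_{-i})\,\d\mu_{-i}(x_{-i})$, whereas the paper simply notes that the same direct estimate applies.
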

\begin{proof}
From the definition of $q$, for all $x \in \Xx$,
\begin{align*}
    q(x) & = \frac{e^{\sum_j \phi_j(x_j)-c(x)}}{\int_{\Xx} e^{\sum_{j} \phi_{j}(x_{j}')-c(x')}\d\mu(x')} \leq \frac{e^{\sum_j \Vert \phi_j\Vert_\infty + \Vert c \Vert_{\Cc^0}}}{e^{-\sum_j \Vert \phi_j\Vert_\infty - \Vert c \Vert_{\Cc^0}}} \leq e^{2(N \Vert \bphi \Vert_{ \Cc^0} + \Vert c \Vert_{\Cc^0})},
\end{align*}
since $\mu \in \Pp(\Xx)$. In addition, from the definition of $q$, we remark that $q$ does not depend of the representative of $\bphi$ in $\tilde \Cc^0$ which gives the upper bound on $q$. We obtain the lower bound, as well as the result for $q_i$ and $q_{-i}$ with the same arguments.
\end{proof}

Let us also remark that in the previous bounds, the norm $\Vert c\Vert_{\Cc^0}$ can be replaced by $\inf_{\kappa \in \RR} \Vert c+\kappa\Vert_{\Cc^0}=(\sup c - \inf c)/2$, i.e.~half the \emph{oscillation of $c$} (in fact, the Schr\"odinger map is invariant if $c$ changes by an additive constant). \rev{The following lemma is central in our development and is an adaptation of~\cite[Prop.~3.1]{carlier2020differential} (with different functional spaces). The first claim of invertibility appeared in a similar form in~\cite[Lem.~5]{gonzalez2022weak} where it is key to prove a central limit theorem for EOT, but our proof (Step 1) is different as (i) in our context there is no natural way to get rid of the non-uniqueness of Schr\"odinger potentials so we work directly in the quotient space $\tilde \Cc^k$ and (ii) our approach leads to control on the norm on the inverse (the second part of the claim).}

\begin{lemma}\label{lem:inverse-function}
Let $k\in \NN^*$ and assume that  $\bphi \in \tilde \Cc^0$ and $c\in \Cc^k(\Xx)$.

Then $D_{\bphi} T(\bphi,\bmu)$ is an invertible linear self-map of $\tilde \Cc^k$. Moreover, there exists $C>0$ that only depends on $N$, $\Vert c\Vert_{\Cc^k}$ and $\Vert \bphi\Vert_{\tilde \Cc^0}$ such that 
\[
\Vert [D_{\bphi} T(\bphi,\bmu)]^{-1}\Vert_{\tilde \Cc^k\to \tilde \Cc^k}\leq C.
\]
\end{lemma}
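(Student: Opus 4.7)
My plan is to write $D_{\bphi} T(\bphi,\bmu) = \mathrm{Id} + K$, where
\[
K_i(\bh)(x_i) := \sum_{j\neq i}\int_{\Xx_{-i}} h_j(x_j)\, q_{-i}(x_{-i}|x_i)\, \d\mu_{-i}(x_{-i}),
\]
and obtain invertibility via Fredholm's alternative, then bootstrap from $\tilde\Cc^0$ to $\tilde\Cc^k$ for the norm bound. This is in the spirit of Prop.~3.1 in \cite{carlier2020differential}, but adapted to the quotient Banach space $\tilde\Cc^k$ and aimed at a quantitative control on the inverse.

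The first step is to show that $K:\tilde\Cc^k\to\tilde\Cc^k$ is compact. The key observation is that the entire $x_i$-dependence of $K_i(\bh)(x_i)$ is carried by the kernel $q_{-i}(\cdot|x_i)$, which is of class $\Cc^k$ in $x_i$ whenever $c\in\Cc^k$ (the potentials $\phi_j$ with $j\neq i$ entering $q_{-i}$ do not depend on $x_i$, so only $\bphi\in\tilde\Cc^0$ is required). Differentiating under the integral and bounding the exponentials via Lem.~\ref{lem:bound-dQ/dmu} yields the smoothing estimate
\[
\|K_i(\bh)\|_{\Cc^k}\leq C\,\|\bh\|_{\tilde\Cc^0}, \qquad C=C\bigl(N,\|c\|_{\Cc^k},\|\bphi\|_{\tilde\Cc^0}\bigr).
\]
Since $\partial_{x_i}^\alpha q_{-i}$ is continuous, hence uniformly continuous, on the compact set $\Xx$ for $|\alpha|\leq k$, the $k$-th derivatives of $K(\bh)$ are equicontinuous in $x_i$ uniformly as $\bh$ ranges over a bounded set of $\tilde\Cc^0$, and Ascoli--Arzelà delivers compactness of $K$ on $\tilde\Cc^k$.

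Next, I would prove triviality of the kernel of $\mathrm{Id}+K$ in $\tilde\Cc^k$. Given $(\mathrm{Id}+K)\bh=0$, I multiply the $i$-th equation by $h_i(x_i)q_i(x_i)$, integrate against $\mu_i$, and sum over $i$. Using the disintegration $\d Q = q_{-i}(x_{-i}|x_i)q_i(x_i)\,\d\mu_i(x_i)\d\mu_{-i}(x_{-i})$ together with the algebraic identity $\sum_i\sum_{j\neq i}a_ia_j = (\sum_i a_i)^2 - \sum_i a_i^2$, the computation collapses to
\[
\int_{\Xx}\Big(\sum_{i=1}^N h_i(x_i)\Big)^{\!2}\,\d Q(x)=0,
\]
so $\sum_i h_i(x_i)=0$ on $\spt Q = \prod_i \spt\mu_i$. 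A standard variable-separation argument then forces each $h_i$ to equal a constant $\kappa_i$ on $\spt\mu_i$ with $\sum_i\kappa_i=0$; reinjecting this into the identity $h_i(x_i) = -\sum_{j\neq i}\int h_j\,q_{-i}(\cdot|x_i)\,\d\mu_{-i}$ (which only sees $h_j$ on $\spt\mu_j$) propagates $h_i\equiv\kappa_i$ to all of $\Xx_i$, so $\bh\equiv 0$ in $\tilde\Cc^k$. Combined with compactness, Fredholm's alternative yields invertibility of $\mathrm{Id}+K$ on $\tilde\Cc^k$.

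For the quantitative bound, my plan is to bootstrap: once a $\tilde\Cc^0$-bound $\|(\mathrm{Id}+K)^{-1}\|_{\tilde\Cc^0\to\tilde\Cc^0}\leq C_0$ is available, writing $\bh=\bg-K\bh$ with $\bg=(\mathrm{Id}+K)\bh$ and invoking the smoothing estimate gives
\[
\|\bh\|_{\tilde\Cc^k}\leq \|\bg\|_{\tilde\Cc^k} + C\,\|\bh\|_{\tilde\Cc^0}\leq (1+CC_0)\,\|\bg\|_{\tilde\Cc^k}.
\]
The main obstacle I anticipate is establishing a \emph{quantitative} $\tilde\Cc^0$-bound $C_0$ with explicit dependence on $(N,\|c\|_{\Cc^0},\|\bphi\|_{\tilde\Cc^0})$ only, since the Fredholm argument by itself is purely qualitative. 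A natural route is a contradiction/compactness argument: a hypothetical sequence $(\bphi^n,\bh^n)$ with $\|\bphi^n\|_{\tilde\Cc^0}\leq R$, $\|\bh^n\|_{\tilde\Cc^0}=1$ and $\|(\mathrm{Id}+K^n)\bh^n\|_{\tilde\Cc^0}\to 0$ can be passed to the limit thanks to the uniform Lipschitz control of Prop.~\ref{prop:preliminaries}(i) and Ascoli--Arzelà, yielding a limiting $\bh^*\neq 0$ in the kernel of the limiting operator and contradicting the injectivity proved above; alternatively, one can extract a direct spectral-gap estimate for the conditional-expectation operators $T_j^i: h_j\mapsto \int h_j\,q_{-i}(\cdot|x_i)\,\d\mu_{-i}$ that constitute $K$.
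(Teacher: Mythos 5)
Your decomposition $D_\bphi T = \mathrm{Id}+K$ and the compactness argument via the smoothing estimate and Ascoli--Arzel\`a coincide with the paper's Step~1. The kernel-triviality computation is also essentially the paper's: you multiply by $h_i q_i$ and integrate against $\mu_i$, the paper multiplies by $h_i$ and integrates against $Q_i=q_i\mu_i$, and both collapse to $\int(\sum_i h_i)^2\,\d Q=0$ followed by propagation to all of $\Xx_i$ via $\bh=-K\bh$. One small gap in your kernel step: since you work in the quotient $\tilde\Cc^k$, an element of $\ker(\Id+K)$ satisfies $\bh+K\bh=(\lambda_i)_i$ with $\sum_i\lambda_i=0$, \emph{not} $\bh+K\bh=0$ in $\prod\Cc^k(\Xx_i)$. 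The paper first shows all $\lambda_i$ vanish (by integrating the $i$-th equation against $Q_i$ and using that all resulting left-hand sides equal $\sum_k\int h_k\,\d Q_k$); your argument as written skips this reduction.

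The real divergence is the quantitative bound, and you yourself flag the obstacle correctly: Fredholm gives invertibility but no control on $\Vert(\Id+K)^{-1}\Vert$. The paper does \emph{not} use a compactness-by-contradiction argument. Instead, Step~2 runs the same variational identity from the kernel step in a non-homogeneous form: given $\bh+K\bh=\bg$, and choosing the representative of $\bg$ normalized so that $\int g_1\,\d Q_1=\dots=\int g_N\,\d Q_N$, the pairing $\sum_i\int g_ih_i\,\d Q_i$ equals $\int(\sum_i h_i)^2\,\d Q$. Cauchy--Schwarz together with the two-sided density bounds of Lem.~\ref{lem:bound-dQ/dmu} and the norm comparison of Lem.~\ref{lem:norms} ($\Vert\bh\Vert_{\tilde L^2_\mu}^2\leq\Vert\oplus h_i\Vert_{L^2(\mu)}^2\leq N\Vert\bh\Vert_{\tilde L^2_\mu}^2$) then yield directly $\Vert\bh\Vert_{\tilde L^2_\mu}\leq C\Vert\bg\Vert_{\tilde\Cc^0}$ with $C$ depending only on $N$, $\Vert c\Vert_{\Cc^0}$, $\Vert\bphi\Vert_{\tilde\Cc^0}$. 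Step~3 then bootstraps $\tilde L^2_\mu\to\tilde\Cc^0\to\tilde\Cc^k$ exactly as you propose. Your compactness route would, even if made rigorous (which requires extracting weak limits of $\bmu^n$, identifying a limiting operator $K^*$, and handling the choice of representatives in the quotient), only deliver existence of a constant $C_0$ with no tractable dependence on the data; this is strictly weaker than what the lemma asserts and would not support the paper's later remark that $C$ depends exponentially on the oscillation of $c$. Your parenthetical mention of a direct spectral-gap estimate is the right instinct --- that is essentially what Step~2 delivers --- but it needs to be carried out, and the choice of representative for $\bg$ satisfying \eqref{eq:output-invariance} is the nontrivial device that makes the pairing well defined and the argument close.
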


\begin{proof}
We have $D_{\bphi} T(\bphi,\bmu)=\Id +L$ with 
 \begin{align*}
 L_{i}(\bh)(x_i) 
 &=\int_{\Xx_{-i}} \Big( \sum_{j\neq i} h_j(x_j) \Big) q_{-i}(x_{-i}|x_i) \d\mu_{-i}(x_{-i}).
 \end{align*}
Observe that since $c\in \Cc^k(\Xx)$, $L_i(\bh) \in \Cc^k(\Xx_i)$ with its derivatives up to order $k$  equi-continuous when $h$ runs through a bounded set of $\tilde \Cc^0$.  It follows, by Arzel\`a-Ascoli Theorem, that $L:\tilde \Cc^{0} \to \tilde \Cc^{k}$ is compact, and a fortiori $L:\tilde \Cc^{k} \to \tilde \Cc^{k}$ is compact too.

\emph{Step 1.} Let us show that $\id +L$ is invertible.
 Let $\bh \in \prod \Cc(\Xx_i)$ be such that $\bh+L(\bh)=0$ in $\tilde \Cc^{0}$, i.e. 
  \begin{align}\label{eq:kernelquotient}
h_i(.) + \int_{\Xx_{-i}} \Big( \sum_{j\neq i} h_j(x_j) \Big) \d Q_{-i}(x_{-i} \vert .)=\lambda_i, \; i=1, \dots, N, \; \sum_{j=1}^N \lambda_j=0.
\end{align}
 Integrating \eqref{eq:kernelquotient} with respect to $Q_i$, we deduce from \eqref{eq:desint-Q}, that 
 \[\sum_{k=1}^N \int_{\Xx_k} h_k \d Q_k=\lambda_i, \; \; i=1, \dots, N\]
 so that all the $\lambda_i$'s are equal to $0$; hence $\bh+L(\bh)=0$ in $\Cc^0$ (and not only in the quotient $\tilde \Cc^{0}$). Then, taking the dot product of $\bh$ with $\bh+L(\bh)$ in $\prod L^2(Q_i)$ and using \eqref{eq:desint-Q}, it follows, reasoning as in~\cite{carlier2020differential},
 \begin{align*}
0 &= \sum_{i=1}^N \int_{\Xx_i} h_i(x_i) \Big(h_i(x_i)+\int_{\Xx_{-i}} \Big( \sum_{j\neq i} h_j(x_j) \Big) \d Q_{-i}(x_{-i}|x_i)\Big)\d Q_{i}(x_i)\\
 & = \sum_{i} \int_{\Xx_i} h_i(x_i)^2 \d Q_i(x_i)+ \sum_{i\neq j}  \int_\Xx h_i(x_i)h_j(x_j)\d Q(x)\\
 &= \int_\Xx \Big(\sum_{i} h_i(x_i)\Big)^2\d Q(x).
 \end{align*}
  We deduce that $x\mapsto \sum h_i(x_i)$ is equal to  $0$ as a function in $L^2(Q)$ and hence in $L^2(\mu)$.
 
 Now consider the space $\tilde L^2_{\bmu} \coloneqq \prod_{i=1}^N L^2(\mu_i)/\sim$ which, endowed with the quotient space structure, is also a Hilbert space. By Lem.~\ref{lem:norms} (proved hereafter), it follows that $\bh \sim 0$ in $\tilde L^2_{\bmu}$, i.e. there exists $\kappa \in \RR^n$ such that $\sum \kappa_i=0$ and $h_i(x_i)= \kappa_i$ for $\mu_i$-a.e.~$x_i$. It only remains to show that this equality holds in fact everywhere. Using $\bh=-L(\bh)$ and $q_{-i}\mu_{-i}\in \Pp(\Xx_{-i})$, it holds for $x_i\in \Xx_i$
 \begin{align*}
 h_i(x_i) &= - \int_{\Xx_{-i}} \Big( \sum_{j\neq i} h_j(x_j)\Big) q_{-i}(x_{-i}|x_i)\d \mu_{-i}(x_{-i}) = -\sum_{j\neq i} \kappa_j = \kappa_i
 \end{align*}
 Thus $\bh = 0$ in $\tilde \Cc^k$. 
 Conversely, any $\bh = 0$ in $\tilde \Cc^k$  also clearly belongs to $\ker(D_{\bphi} T(\bphi,\bmu))$. Hence $\ker(D_{\bphi} T(\bphi,\bmu))$ is precisely the equivalence class of $0$  i.e.~$D_{\bphi} T(\bphi,\bmu)$ is injective on $\tilde \Cc^k$. Since $L$ is a compact operator of $\tilde \Cc^k$, it follows from the Fredholm Alternative Theorem~\cite[Chap.~6]{brezis2011functional} that the range of $\Id+L$ is $\tilde \Cc^k$. Hence $D_{\bphi} T(\bphi,\bmu)$ is onto and therefore an invertible linear self-map of $\tilde \Cc^k$.

\emph{Step 2.} Now let us estimate the operator norm of $D_{\bphi} T(\bphi,\bmu)^{-1}$ as a self-map of $\tilde \Cc^k$. Let $\bh\in \tilde \Cc^k, \bg\in \tilde \Cc^k$ be such that $\bh+L(\bh)=\bg$. Let us choose the representative of $\bg$ that satisfies 
 \begin{align}\label{eq:output-invariance}
 \int_{\Xx_1} g_1(x_1)\d Q_1(x_1)=\dots = \int_{\Xx_N} g_N(x_N)\d Q_N(x_N).
 \end{align}
 Reasoning as above, it holds
 \begin{align*}
 \sum_{i=1}^N \int_{\Xx_i} g_i(x_i)h_i(x_i)\d Q_{i}(x_i) 
 & = \sum_{i} \int_{\Xx_i} h_i(x_i)^2 \d Q_i (x_i)+ \sum_{i,j,\, i\neq j}  \int_\Xx h_i(x_i)h_j(x_j)\d Q (x)\\
 & = \int_\Xx \Big(\sum_{i} h_i(x_i)\Big)^2\d Q (x)
 \end{align*}
 where the first integral is unambiguously defined because, thanks to our choice of representative for $g$, it does not depend on the representative chosen for $h$. Let us choose the optimal representative for $\bh$ in $\tilde L^2_{\bmu}$ appearing in Lem.~\ref{lem:norms}.
We have, for some $C>0$ that may change from a line to another but only depends on $N$, $\Vert c\Vert_{\Cc_0}$ and $\Vert \bphi\Vert_{\tilde \Cc_0}$:
 \begin{align*}
\Vert \bh \Vert^2_{\prod L^2(\mu_i)}
&\overset{(i)}{\leq}
 N \left\Vert \oplus_{i=1}^N h_i\right\Vert^2_{L^2(\mu)} \\
& \overset{(ii)}{\leq}
 C \left\Vert \oplus_{i=1}^N h_i\right\Vert^2_{L^2(Q)} \\ 
& \overset{(iii)}{\leq}
  C \Vert \bg\Vert_{\prod L^2(Q_i)} \Vert \bh\Vert_{\prod L^2(Q_i)}\\
  & \overset{(iv)}{\leq} C\Vert \bg\Vert_{\prod L^2(Q_i)} \Vert \bh\Vert_{\prod L^2(\mu_i)}
 \end{align*}
 where we have used (i) Lem.~\ref{lem:norms} (where the notation $\oplus$ is defined) , (ii)\&(iv) Lem.~\ref{lem:bound-dQ/dmu}, and (iii) the previous computation and Cauchy-Schwarz inequality  in $\prod L^2(Q_i)$.
It follows, invoking once again Lem.~\ref{lem:norms} and Lem.~\ref{lem:bound-dQ/dmu}, that 
\[
\Vert  \bh\Vert_{\tilde L^2_{\bmu}} = \Vert \bh \Vert_{\prod L^2(\mu_i)}  \leq C \Vert \bg\Vert_{\prod L^2(Q_i)} = C \Vert \bg\Vert_{ \tilde L^2_Q} \leq C \Vert \bg\Vert_{ \tilde \Cc^0}.
\]
For the last equality, we have used the fact that the $\tilde L^2_Q$ norm is precisely the $\prod L^2(Q_i)$ norm of the representative that satisfies Eq.~\eqref{eq:output-invariance}, by Lem.~\ref{lem:norms} (here $\tilde L^2_Q$ is defined similarly as $\tilde L^2_{\bmu}$ from the marginals $Q_i$ of $Q$).

\emph{Step 3.} We now improve the $\tilde L^2_{\bmu}$ control into a $\tilde \Cc^k$ control. Restarting from $\bh+L(\bh)=\bg$, it holds
\begin{align}\label{hLhg}
 h_i(x_i) = g_i(x_i) - \int_{\Xx_{-i}} \Big( \sum_{j\neq i}h_j(x_j) \Big) q_{-i}(x_{-i}|x_i)\d\mu_{-i}(x_{-i}).
\end{align}
Thanks to our control on $\Vert \bh\Vert_{\prod L^2(\mu_i)}$ by  $\Vert \bg\Vert_{\tilde \Cc^0}$, given constants $\kappa_i$, it follows from \eqref{hLhg} that
\[ \Vert h_i + \kappa_i \Vert_{\tilde \Cc^0(\Xx_i)} \leq \Vert g_i + \kappa_i \Vert_{ \tilde \Cc^0(\Xx_i)}+ C \Vert \bg\Vert_{ \tilde \Cc^0} \]
summing over $i$ and minimizing with respect to the $\kappa_i$'s summing to $0$, we get
\[ \Vert \bh  \Vert_{\tilde \Cc^0} \leq  C \Vert \bg\Vert_{ \tilde \Cc^0}.\]
In a similar way, using the fact that $c\in \Cc^k$, successive differentiations of \eqref{hLhg} yield 
\[\Vert \bh \Vert_{\tilde \Cc^k}= \Vert [D_{\bphi} T(\bphi,\bmu)]^{-1}(\bg) \Vert_{\tilde \Cc^k}  \leq C   \Vert \bg \Vert_{\tilde \Cc^k}
\]
for a constant $C$ that only depends on $N$, $\Vert c\Vert_{\Cc^k}$ and $\Vert \bphi\Vert_{\tilde \Cc^0}$.
\end{proof}

To end this section, we prove Lem.~\ref{lem:norms} used in the previous proof. 

\begin{lemma}\label{lem:norms}
For $\bh\in\prod_{i=1}^N L^2(\mu_i)$, denoting $ \oplus_{i=1}^N h_i : x\mapsto \sum_{i=1}^N h_i(x_i)$ it holds
\[
 \Vert \bh \Vert^2_{\tilde L^2_{\bmu}}\leq \left\Vert \oplus_{i=1}^N h_i\right\Vert^2_{L^2(\mu)} \leq N \Vert \bh \Vert^2_{\tilde L^2_{\bmu}} .
\]
Moreover, the quotient norm is achieved by the unique representative $\hat \bh \sim \bh$ that satisfies $\int_{\Xx_1} \hat h_1 \d\mu_1 = \dots = \int_{\Xx_N} \hat h_N \d\mu_N$, i.e. it holds $\Vert h\Vert^2_{\tilde L^2_{\bmu}} =\sum_{i=1}^N \Vert \hat h_i\Vert^2_{L^2(\mu_i)}$. 
\end{lemma}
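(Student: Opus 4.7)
The plan is to exploit the Hilbert-space structure of the quotient $\tilde L^2_{\bmu}$: the equivalence class of $0$ is the closed $(N-1)$-dimensional subspace $V := \{\kappa \in \RR^N : \sum_i \kappa_i = 0\}$ of $\prod_i L^2(\mu_i)$, so the quotient norm is the distance to $V$, and the optimal representative $\hat\bh$ is characterized by orthogonality of $\hat\bh$ to $V$ (which will yield the ``equal means'' condition). First I would note that the map $\oplus_{i=1}^N h_i$ descends to the quotient: replacing $h_i$ by $h_i+\kappa_i$ with $\sum\kappa_i=0$ shifts the pointwise sum $x\mapsto\sum_i h_i(x_i)$ by $\sum_i\kappa_i=0$, so $\Vert\oplus_i h_i\Vert_{L^2(\mu)}$ depends only on $[\bh]$.

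For the upper bound, I would apply Cauchy--Schwarz pointwise, $\big(\sum_i h_i(x_i)\big)^2 \leq N\sum_i h_i(x_i)^2$, integrate against $\mu=\otimes_i\mu_i$ using that each $\mu_i$ is a probability, and take the infimum over representatives of $\bh$ on the right-hand side. This gives $\Vert\oplus_i h_i\Vert^2_{L^2(\mu)}\leq N\,\Vert\bh\Vert_{\tilde L^2_{\bmu}}^2$ with no further work.

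For the identification of $\hat\bh$ and the matching formula $\Vert\bh\Vert^2_{\tilde L^2_{\bmu}}=\sum_i\Vert\hat h_i\Vert^2_{L^2(\mu_i)}$, I would minimize the strictly convex quadratic $\kappa\mapsto\sum_i\Vert h_i+\kappa_i\Vert^2_{L^2(\mu_i)}$ over the linear constraint $\sum_i\kappa_i=0$ via Lagrange multipliers: the first-order condition $2(\bar h_i+\kappa_i)=\lambda$ together with the constraint yields the unique minimizer $\kappa_i^*=\frac{1}{N}\sum_j\bar h_j-\bar h_i$, which makes $\int(h_i+\kappa_i^*)\d\mu_i=\frac{1}{N}\sum_j\bar h_j$ independent of $i$. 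Denoting this common mean by $\bar h$ and writing $\hat\bh=\bh+\kappa^*$, strict convexity gives uniqueness.

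Finally for the lower bound I would exploit the product structure of $\mu$ to expand
\[
\Vert\oplus_i\hat h_i\Vert^2_{L^2(\mu)} = \sum_i\Vert\hat h_i\Vert^2_{L^2(\mu_i)} + \sum_{i\neq j}\bar{\hat h}_i\bar{\hat h}_j = \sum_i\Vert\hat h_i\Vert^2_{L^2(\mu_i)} + N(N-1)\bar h^2,
\]
which is manifestly at least $\sum_i\Vert\hat h_i\Vert^2_{L^2(\mu_i)}=\Vert\bh\Vert^2_{\tilde L^2_{\bmu}}$, completing both the lower bound and the final displayed equality. There is no significant obstacle: everything reduces to an elementary orthogonal-projection computation in the product Hilbert space, the only point requiring a touch of care being the well-definedness of $\oplus$ on the quotient at the very start.
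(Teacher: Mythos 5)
Correct, and essentially the paper's argument: both identify the equal-means representative $\hat\bh$ via Lagrange multipliers over $\{\sum_i\kappa_i=0\}$, and both use the tensor structure of $\mu$ to expand $\Vert\oplus_i\hat h_i\Vert^2_{L^2(\mu)}$ as $\sum_i\Vert\hat h_i\Vert^2_{L^2(\mu_i)}$ plus a non-negative cross-term involving the common mean. Your pointwise Cauchy--Schwarz followed by an infimum over representatives for the upper bound is a mild shortcut relative to the paper's explicit variance-and-mean formulae, but it does not change the substance of the proof.
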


\begin{proof}
By definition,
\begin{align}\label{eq:optim-norm}
\Vert \bh \Vert^2_{\tilde L^2_{\bmu}} &=\min_{\substack{\kappa\in \RR^N\\ \sum_i\kappa_i=0}} \sum_{i=1}^N \int_{\Xx_i} (h_i(x_i)-\kappa_i)^2\d\mu_i(x_i).
\end{align}
A vector $\kappa\in \RR^N$ solves this problem iff $\sum_i \kappa_i=0$ and there exists a Lagrange multiplier $\nu\in \RR$ such that for $i\in [N]$,
\[
0=\int_{\Xx_i} (h_i(x_i)-\kappa_i)\d\mu_i(x_i) -\nu  = \E_{\mu_i}[h_i] -\kappa_i -\nu.
\]
with the shorthand $\E_\mu[h]\coloneqq \int h\d\mu$ and $\Var_{\mu}(h)\coloneqq \E_\mu[(h-\E_\mu[h])^2]$. It follows that $\nu = \frac1N \sum_{i=1}^N \E_{\mu_i}[h_i] $ and as a consequence
\begin{align*}
\Vert \bh \Vert^2_{\tilde L^2_{\bmu}} &=\sum_{i=1}^N \int_{\Xx_i} (h_i(x_i)-\E_{\mu_i}[h_i]+\nu)^2\d\mu_i(x_i)\\
&= \sum_{i=1}^N\Big[ \int_{\Xx_i} (h_i(x_i)-\E_{\mu_i}[h_i])^2\d\mu_i(x_i) + \nu^2\Big]\\
&= \sum_{i=1}^N \Var_{\mu_i}(h_i) + \frac1N \Big(\sum_{i=1}^N \int h_i\d\mu_i\Big)^2
\end{align*}
where the second equality follows by expanding the square and observing that the cross-terms vanish. On the other hand, using the fact that $\sum_{i} \kappa_i=0$,  it holds
\begin{align*}
\left\Vert \oplus_{i=1}^N h_i\right\Vert^2_{L^2(\mu)} 
&= \int_{\Xx} \big(\sum_{i=1}^N (h_i(x_i)-\kappa_i)\big)^2\d\mu(x)\\
&= \sum_{i,j, i\neq j} \int_{\Xx_i \times \Xx_j} (h_i(x_i) - \E_{\mu_i}[h_i]+\nu)(h_j(x_j) - \E_{\mu_j}[h_j]+\nu)\d\mu_i(x_i)\d\mu_j(x_j) \\
&\quad+ \sum_{i} \int_{\Xx_i} (h_i(x_i) - \E_{\mu_i}[h_i])+\nu)^2\d\mu_i(x_i)\\
&=N(N-1)\nu^2 + \Vert \bh \Vert^2_{\tilde L^2_{\bmu}}\\
& = \sum_{i=1}^N \Var_{\mu_i}(h_i) + \Big(\sum_{i=1}^N \int_{\Xx_i} h_i\d\mu_i\Big)^2
\end{align*}
The first claim follows. For the second claim, observe that this representative $\hat h$ satisfies the optimality condition of Eq.~\eqref{eq:optim-norm}.
\end{proof}

\subsection{Differentiability of $G$}

Let us first establish the regularity of the map $G$.
\begin{lemma}\label{lem:regularity-of-G}
For $p,k\in \NN^*$, $p\leq k$, if $c\in \Cc^{k+p}(\Xx)$, then the map $G: \Cc^k \times [0,1]\to  \Cc^k$ is of class $\Cc^{p}$.
\end{lemma}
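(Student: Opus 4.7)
My plan is to split
\[
G_i(\bphi,t)(x_i) = \phi_i(x_i) + \log I_i(\bphi,t,x_i), \qquad I_i(\bphi,t,x_i) := \int_{\Xx_{-i}\times\Xx_{-i}} e^{F(\bphi,t,x,y)}\d\gamma_{-i}(x_{-i},y_{-i}),
\]
with $F(\bphi,t,x,y) := \sum_{j\neq i}\phi_j((1-t)x_j+ty_j) - c((1-t)x+ty)$, and reduce everything to the regularity of $I_i$. The first summand is linear (hence $\Cc^\infty$) in $\bphi$, constant in $t$, and lies in $\Cc^k(\Xx_i)$, so only $\log I_i$ requires attention. Since $\Xx$ is compact and $\gamma_{-i}$ is a probability measure, the same argument as in Lem.~\ref{lem:bound-dQ/dmu} gives $I_i \geq e^{-2(N\|\bphi\|_{\Cc^0}+\|c\|_{\Cc^0})} > 0$ uniformly; because $\log$ is $\Cc^\infty$ on $(0,\infty)$, the chain rule reduces the claim to showing that $(\bphi,t)\mapsto I_i(\bphi,t,\cdot)$ is $\Cc^p$ from $\Cc^k\times[0,1]$ into $\Cc^k(\Xx_i)$.

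I would then justify differentiation under the integral sign. A Fr\'echet derivative of $I_i$ in $\bphi$ in the direction $\bh$ multiplies the integrand by $\sum_{j\neq i}h_j((1-t)x_j+ty_j)$, and a derivative in $t$ multiplies it by
\[
\sum_{j\neq i}\bigl(\nabla\phi_j((1-t)x_j+ty_j) - \partial_{x_j}c((1-t)x+ty)\bigr)\cdot(y_j-x_j),
\]
the $i$-th contribution to $\partial_t c$ vanishing because of the convention $y_i = x_i$. Iterating, a mixed derivative of total order $q+r \leq p$ (with $r$ in $t$) leaves an integrand that is a polynomial in derivatives of $\phi_j$ and of $c$ of order at most $r\leq p\leq k$, multiplied by $e^F$. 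Since $\bphi\in\Cc^k$ and $c\in\Cc^{k+p}$, up to $k$ further differentiations in $x_i$ (acting only on the $c$-factors, as $\phi_j$ for $j\neq i$ does not depend on $x_i$) consume at most $k+p$ derivatives of $c$ in total, all of which are available. Each such integrand is continuous on the compact set $\Xx_i\times\Xx_{-i}\times\Xx_{-i}$ and uniformly bounded in terms of $\|\bphi\|_{\Cc^k}$ and $\|c\|_{\Cc^{k+p}}$, so dominated convergence legitimises each differentiation under the integral and yields joint continuity in $(\bphi,t,x_i)$.

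The main obstacle is the combinatorial bookkeeping: one must organise the multi-indices so as to verify that every mixed partial derivative in $(\bphi,t,x_i)$ produces an integrand controlled in terms of $\|\bphi\|_{\Cc^k}$ and $\|c\|_{\Cc^{k+p}}$, and that the formal derivatives genuinely define Fr\'echet derivatives of $I_i$ as a map with values in $\Cc^k(\Xx_i)$ (as opposed to merely pointwise derivatives). Once this is settled, applying Fa\`a di Bruno's formula to $\log\circ I_i$ and adding back the trivial linear term transfers the regularity to $G$ and gives $G\in\Cc^p(\Cc^k\times[0,1];\Cc^k)$ as claimed.
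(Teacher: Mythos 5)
Your argument is correct and follows essentially the same route as the paper: reduce to regularity of the integrand in $(\bphi,t)$ with bounds uniform in the integration variables $(x_{-i},y_{-i})$, then differentiate under the integral via dominated convergence, with the budget $c\in\Cc^{k+p}$ split between up to $p$ derivatives in $(\bphi,t)$ and up to $k$ in $x_i$ (the latter never hitting the $\phi_j$-factors since $j\neq i$). The paper packages the same observation through the auxiliary map $L_j:(\phi_j,t)\mapsto\phi_j(x_j+t(y_j-x_j))$ and an induction on $k$, while you track the iterated derivatives of $I_i=\int e^F\,\d\gamma_{-i}$ directly and make the positivity lower bound and the $\log$-composition explicit, but the substance is the same.
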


\begin{proof}
The $i$-th component of  $G$ can be expressed as
\[
G_i(\bphi,t)(x_i) = \phi_i(x_i)+\log \Big( \int_{\Xx_{-i} \times \Xx_{-i}} e^{\sum_{j\neq i} \phi_j((1-t)x_j+ty_j)-c(x_i, (1-t)x_{-i}+ty_{-i})} \d\gamma_{-i}(x_{-i},y_{-i})\Big)
\]
Fixing $i$ and  $(x_{-i}, y_{-i})\in \Xx_{-i} \times \Xx_{-i}$, let us observe that when  $c\in \Cc^{k+p}(\Xx)$,  the curve $t\in [0,1] \mapsto c(., (1-t)x_{-i}+ty_{-i} ) \in \Cc^k(\Xx_i)$ is of class $\Cc^p$ and that its  derivatives up to order $p$ can be bounded independently of $(x_{-i}, y_{-i})$. Now, for $j\neq i$ (and fixed $x_j$ and $y_j$ in $\Xx_j$), consider the  real-valued map $L_j:(\phi_j, t)\in \Cc^k(\Xx_j)\times [0,1]\mapsto \phi_j(x_j +t(y_j-x_j))$. For $k=1$, this map admits partial derivatives with respect to $t$ and $\phi_j$ which are given respectively by $\nabla \phi_j(x_j+t(y_j-x_j))^\top (y_j-x_j)$ and $L_j(., t)$, both being continuous (for the $\Cc^1$ norm for $\phi_j$) so that $L_j \in \Cc^1(\Cc^1(\Xx_j)\times [0,1], \RR)$ note also that the first-order partial derivatives of $L_j$ can be bounded by a constant depending on the $\Cc^1$ norm of $\phi_j$ but not on $x_j, y_j$. 

For $k\geq 2$, we can argue inductively. Indeed, by the previous argument,  showing $k$ times continuous differentiability of $L_j$ amounts to showing $k-1$ times continuous differentiability of $L_j$ applied to  $\nabla \phi_j(.)^\top (y_j-x_j)$ and $t$. This shows that $L_j \in \Cc^k(\Cc^k(\Xx_j)\times [0,1], \RR)$, with bounds on derivatives up to order $k$ controlled by the $\Cc^k$ norm of $\phi_j$  independently of $(x_j, y_j)\in \Xx_j\times \Xx_j$. By the chain rule and differentiating under the integral sign by dominated convergence, we can readily conclude that $G$ is of class $\Cc^{\min\{k,p\}}=\Cc^p$ from $\Cc^k(\Xx)\times [0,1]$ to $\Cc^k(\Xx)$.
\end{proof}

We now give a quantitative regularity estimate for the partial derivative of $G$ in its real variable $t$.

\begin{lemma}\label{lem:differential-in-t}
Let $k\in \NN^*$ and assume that $c\in \Cc^{k+1}(\Xx)$. Given $\bphi \in \tilde \Cc^1$, the partial differential of $G$ in $t$ satisfies
$$
\Vert D_t G(\bphi,t) \Vert_{\tilde \Cc^{k}} \leq C \sqrt{\cost(\gamma)}
$$
where $C>0$ only depends on $\Vert \bphi\Vert_{\tilde \Cc^1}$ and $\Vert c\Vert_{\Cc^{k+1}}$ and $\cost(\gamma)$ is the transport cost associated with $\gamma$ as in Thm.~\ref{thm:main}.
\end{lemma}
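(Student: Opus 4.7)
The plan is to compute $D_tG(\bphi,t)$ explicitly and exploit the fact that differentiation in $t$ pulls down exactly one displacement factor $(y_j-x_j)$ in the integrand. A Cauchy--Schwarz applied marginal by marginal then converts this $L^1$-type bound into the $\sqrt{\cost(\gamma)}$ factor, and the core issue is to verify that subsequent $x_i$-differentiations (up to order $k$) do not destroy this displacement structure.

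\textbf{Step 1 (explicit formula).} Writing $G_i(\bphi,t)(x_i) = \phi_i(x_i) + \log A^t_i(x_i)$ with
\[
A^t_i(x_i) := \int_{\Xx_{-i}\times\Xx_{-i}} e^{\sum_{j\neq i}\phi_j((1-t)x_j+ty_j) - c(x_i,(1-t)x_{-i}+ty_{-i})}\d\gamma_{-i}(x_{-i},y_{-i}),
\]
differentiation in $t$ under the integral yields
\[
D_tG_i(\bphi,t)(x_i) = \int F^t_i(x_i,x_{-i},y_{-i})\d Q^t_{-i}(x_{-i},y_{-i}\vert x_i)
\]
with
\[
F^t_i = \sum_{j\neq i} \nabla\phi_j((1-t)x_j+ty_j)\cdot(y_j-x_j) - \nabla_{x_{-i}}c(x_i,(1-t)x_{-i}+ty_{-i})\cdot(y_{-i}-x_{-i}),
\]
and $Q^t_{-i}(\cdot\vert x_i)$ the probability measure on $\Xx_{-i}\times\Xx_{-i}$ whose density with respect to $\gamma_{-i}$ is the normalized integrand of $A^t_i$. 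As in Lem.~\ref{lem:bound-dQ/dmu}, this density is uniformly bounded by a constant depending only on $N$, $\Vert\bphi\Vert_{\tilde\Cc^0}$ and $\Vert c\Vert_{\Cc^0}$.

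\textbf{Step 2 ($\Cc^0$ bound and higher-order derivatives).} Each term of $F^t_i$ is linear in a displacement $(y_j-x_j)$. Using $\Vert\nabla\phi_j\Vert_\infty \leq \Vert\bphi\Vert_{\tilde\Cc^1}$, $\Vert\nabla_{x_{-i}}c\Vert_\infty \leq \Vert c\Vert_{\Cc^1}$, the density bound of Step 1, and $\gamma_{-i}=\otimes_{j\neq i}\gamma_j$, Cauchy--Schwarz applied marginal by marginal gives
\[
|D_tG_i(\bphi,t)(x_i)| \leq C\sum_{j\neq i}\int|y_j-x_j|\d\gamma_j \leq C\sum_{j\neq i}\sqrt{\textstyle\int|y_j-x_j|^2\d\gamma_j} \leq C\sqrt{\cost(\gamma)}.
\]
For the $\Cc^k$ bound, observe that $\phi_j((1-t)x_j+ty_j)$ is independent of $x_i$ for $j\neq i$, so any $x_i$-differentiation acts only on the factor $c(x_i,(1-t)x_{-i}+ty_{-i})$ appearing in both $F^t_i$ and the density of $Q^t_{-i}$. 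Applying $\partial^\alpha_{x_i}$ with $|\alpha|\leq k$ through the quotient rule (equivalently, expressing $\log A^t_i$ and its derivatives in terms of cumulants of $Q^t_{-i}$) produces a finite sum of terms of the form $\int P\d Q^t_{-i}(\cdot\vert x_i)$ multiplied by polynomial combinations of moments under $Q^t_{-i}$ of derivatives $\nabla^\beta_{x_i}c$ with $|\beta|\leq k+1$. Crucially, each such term still contains exactly one displacement factor, inherited from the unique $\partial_t$, while all other factors are bounded in $L^\infty$ by a constant depending only on $N$, $\Vert\bphi\Vert_{\tilde\Cc^1}$ and $\Vert c\Vert_{\Cc^{k+1}}$. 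Re-applying the Cauchy--Schwarz argument above to each term yields $\Vert D_tG_i\Vert_{\Cc^k}\leq C\sqrt{\cost(\gamma)}$; summing over $i$ and using $\Vert\cdot\Vert_{\tilde\Cc^k}\leq\Vert\cdot\Vert_{\prod\Cc^k}$ concludes.

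The main obstacle is the bookkeeping in Step 2: iterated differentiation of $\partial_tA^t_i/A^t_i$ generates a combinatorial explosion of terms through the quotient rule, and one must check that none of them can lose the displacement factor. The key structural observation is that $x_i$-derivatives only act on $c$ in its first slot and never touch the $(y_j-x_j)$ or $(y_{-i}-x_{-i})$ factors in $F^t_i$, so the linearity in a displacement is preserved term by term; recasting everything in terms of connected moments of $Q^t_{-i}$, which are uniformly controlled by Lem.~\ref{lem:bound-dQ/dmu} and $\Vert c\Vert_{\Cc^{k+1}}$, gives a conceptually transparent organization of the argument.
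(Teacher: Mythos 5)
Your proof is correct and follows essentially the same route as the paper: differentiate in $t$ to exhibit the displacement factors $(y_j-x_j)$ against the reweighted plan $Q^t_{-i}$, bound the density $q^t_{-i}$ uniformly as in Lem.~\ref{lem:bound-dQ/dmu}, apply Cauchy--Schwarz to produce $\sqrt{\cost(\gamma)}$, and note that $x_i$-differentiation only hits $c$ (in $F^t_i$ and in the density) and so preserves exactly one displacement factor per term. The paper performs a single Cauchy--Schwarz in $L^2(Q^t_{-i})$ rather than marginal by marginal, and is terser about the $\Cc^k$ bookkeeping than your cumulant reorganization, but these are cosmetic differences.
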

\begin{proof}

By Lem.~\ref{lem:regularity-of-G}, $G$ is differentiable in $t$.
Using the shorthand $x^t \coloneqq (1-t)x+ty$ and again the convention $y_i=x_i$, it holds
\begin{align}\label{eq:dtG}
\frac{d}{dt}G_i(\bphi,t)(x_i) =\int_{\Xx_{-i} \times \Xx_{-i}} \Big(\sum_{j\neq i} (y_j-x_j)^\top (\nabla \phi_j(x_j^t) - \nabla_j c(x^t))\Big)\d Q^t_{-i}(x_{-i},y_{-i}|x_i)
\end{align}
with $Q^t_{-i} \coloneqq q^t_{-i}\gamma_{-i} \in \prod_{j\neq i} \Pp(\Xx_j\times \Xx_j)$ and, posing $(x')^t_i=x_i^t$,
$$
q^t_{-i}(x_{-i},y_{-i}|x_i) \coloneqq  \frac{e^{\sum_{j\neq i} \phi_j(x^t_j)-c(x^t)}}{\int_{\Xx_{-i} \times \Xx_{-i}} e^{\sum_{j\neq i} \phi_j((x')^t_j)-c((x')^t)}\d\gamma_{-i}(x'_{-i},y'_{-i})}.
$$
Reasoning as in Lem.~\ref{lem:bound-dQ/dmu}, this function $q^t_{-i}$ admits positive upper and lower bounds only depending on $\Vert \bphi\Vert_{\tilde \Cc_0}$ and $\Vert c\Vert_{\Cc^0}$.
Let us now control Eq.~\eqref{eq:dtG}, starting with a control in uniform norm. First, by Cauchy-Schwarz in $L^2(Q^t_{-i}(\cdot,\cdot|x_i))$, for $i\in [N]$ and $x_i\in \Xx_i$,
\begin{multline}
\label{eq:CSG}
\Big\vert \frac{d}{dt}G_i(\bphi,t)(x_i) \Big\vert^2
\\ \leq \left( \int_{\Xx_{-i} \times \Xx_{-i}}\!\! \Vert y-x\Vert^2 \d Q^t_{-i}(x_{-i},y_{-i}|x_i) \right) \left( \int_{\Xx_{-i} \times \Xx_{-i}}\!\! \Vert (\nabla \bphi - \nabla c)(x^t)\Vert^2 \d Q^t_{-i}(x_{-i},y_{-i}|x_i)\right) 
\end{multline}
where $\nabla \bphi \coloneqq (\nabla\phi_1,\dots,\nabla \phi_N)$. Observe that the second factor is uniformly bounded for $x_i\in \Xx_i$ because $Q^t_{-i}(\cdot,\cdot|x_i)$ is a probability measure and both $\bphi$ and $c$ are continuously differentiable on a compact set. It  follows,
\begin{align*}
\Big\Vert \frac{d}{dt}G_i(\bphi,t) \Big\Vert_{\Cc^0} &\leq C \sup_{x_i\in \Xx_i}\left( \int_{\Xx_{-i} \times \Xx_{-i}} \Vert y-x\Vert^2 \d Q^t_{-i}(x_{-i},y_{-i}|x_i) \right)^{1/2} \\
&\leq C' \left( \int_{\Xx_{-i} \times \Xx_{-i}} \Vert y-x\Vert^2 \d \gamma_{-i}(x_{-i},y_{-i}) \right)^{1/2} \\
&\leq  C'\sqrt{\cost(\gamma)}
\end{align*}
where $C,C'$ depend on $\Vert c\Vert_{\Cc^1} $ and $\Vert \bphi \Vert_{\tilde \Cc^1}$ only. Moreover, one can further differentiate Eq.~\eqref{eq:dtG} in $x_i$ and obtain analogous bounds because this variable only appears in the term $\nabla_j c$ which is of regularity $\Cc^{k}$ and in the factor $q^t_{-i}$ which is of regularity $\Cc^{k+1}$. With this reasoning, it follows
$$
\Big\Vert \frac{d}{dt}G_i(\bphi,t) \Big\Vert_{\tilde \Cc^k} \leq C_k\sqrt{\cost(\gamma)}.
$$
where $C_k$ depends on $\Vert c\Vert_{\tilde \Cc^{k+1}} $ and $\Vert \bphi \Vert_{\tilde \Cc^1}$ only.

\end{proof}

\subsection{Proof of Thm.~\ref{thm:main}}\label{proofLipW2}

\begin{proof}
Let us first observe that $c$ can be extended in a $\Cc^{k+p}$ way to a convex open set containing $\Xx$. One can therefore extend by extrapolation the definition of $\bmu^t$ to an open time interval $(-\varepsilon, 1+\varepsilon)$, for some $\varepsilon>0$ containing $[0,1]$.
We shall then apply the Implicit Function Theorem (IFT) to $G:\tilde \Cc^k \times [0,1]\to \tilde \Cc^k$ defined in \eqref{def: G}. 

The existence and continuity on $[0,1]$ of the Schr\"odinger map $t\mapsto \bphi^t$ is guaranteed by Prop.~\ref{prop:preliminaries}. 

In Lem.~\ref{lem:regularity-of-G}, we have shown that $G$ is of class $\Cc^p$ and in Lem.~\ref{lem:inverse-function}, we have shown that $D_{\bphi} G(\bphi^t,t) = D_{\bphi} T(\bphi^t,\bmu^t)$ is an invertible linear self-map of $\tilde \Cc^k$. Thus all the hypotheses are gathered to apply the Implicit Function Theorem, see e.g.~\cite[Thm.~10.2.1]{dieudonne2011foundations}: the map $t\mapsto \bphi^t$ is of class $\Cc^p$ on $[0,1]$ and its derivative is given by
\[
D_t\bphi^t = -  [D_{\bphi} T(\bphi^t,\bmu^t)]^{-1} \circ D_t G(\bphi^t,t).
\]
Moreover, we have by respectively Lem.~\ref{lem:inverse-function} and Lem.~\ref{lem:differential-in-t} that there exists $C>0$ only depending on $N$, $\Vert c\Vert_{\Cc^{k+1}}$ and $\Vert \bphi^t\Vert_{\tilde \Cc^1}$ such that 
\begin{align*}
\Vert [D_{\bphi} T(\bphi^t,\bmu^t)]^{-1}\Vert_{\mathrm{op}}\leq C &&\text{and}&&\Vert D_t G(\bphi^t,t)\Vert_{\mathrm{op}}\leq C \sqrt{\cost(\gamma)}.
\end{align*}
Since we know by Prop.~\ref{prop:preliminaries} that $\Vert \bphi^t\Vert_{\tilde \Cc^1}$ is a priori bounded by $\Vert c\Vert_{\Cc^1}$, it follows that $\Vert D_t\bphi^t\Vert \leq C' \sqrt{\cost(\gamma)}$ for some $C'_k>0$ that only depends on $\Vert c\Vert_{\Cc^{k+1}}$.  The Lipschitz estimate in Thm.~\ref{thm:main} follows by the mean-value inequality.
\end{proof}

\rev{\subsection{Proof of Proposition \ref{cor:lipschitz-sobolev-multi}}\label{proofsobneg}

Recall that the Sobolev space $H^p(\Xx_i)$ consists of all functions $f_i \in L^2(\Xx_i)$ whose partial derivatives up to order $p$ belong to $L^2(\Xx_i)$ which is a Hilbert space for the norm
\[ \Vert f_i  \Vert^2_{H^p(\Xx_i)}:=\sum_{ \alpha \; : \; \vert \alpha \vert \leq p} \int_{\Xx_i} \vert \partial^\alpha f_i \vert^2.\]
If $p>d/2$, by Morrey's Theorem (see \cite{brezis2011functional}), $H^p(\Xx_i)$ embeds continuously into the space of continuous functions, hence, by duality, measures belong to the dual space $H^{-p}(\Xx_i)$. We can therefore define
\[\Vert \bmu-\bnu\Vert_{H^{-p}}:=\sum_{i=1}^{N} \Vert \mu_i-\nu_i \Vert_{H^{-p}}\]
where 
\[\Vert \mu_i-\nu_i \Vert_{H^{-p}}:=\sup\Big\{ \int_{\Xx_i } f_i \d (\mu_i-\nu_i) \; : \; \ \Vert f_i \Vert_{H^p(\Xx_i)} \leq 1\Big\}.\]
To obtain the bound announced in Prop. \ref{cor:lipschitz-sobolev-multi}, we simply consider the linear interpolation between $\bmu$ and $\bnu$, $\bmu^t:=\bmu+ t(\bnu-\bmu)$ for $t\in [0,1]$ and $G(\bphi, t):=T(\bphi,\bmu^t)$ as well as $\bphi^t:=S(\bmu^t) \in \tilde \Cc^k$ i.e. $G(\bphi^t, t)=0$. Recall that $\bphi^t$ is bounded in $\Cc^k$ by a constant that only depends on $c$. The same holds for the operator norm of $[D_{\bphi} T(\bphi^t,\bmu^t)]^{-1}$ in $\Cc^k$ as well.  To conclude as before by the implicit function theorem, we have to  differentiate $G$ with respect to $t$ and bound the $\Cc^k$ norm of $D_t G(\phi^t, t)$ by a constant depending on $c$ times $\Vert \bmu- \bnu\Vert_{H^{-p}}$. To simplify notations, let us set
\[\xi_i(\bphi, t)(x_i):=\frac{1}{\int_{\Xx_{-i}} e^{-c(x_i, x_{-i})+ \sum_{j\neq i} \phi_j(x_j)} \d \mu_{-i}^t (x_{-i})} \]
and observe that $\xi_i(\bphi^t, t)(.)$ has uniformly bounded derivatives up to order $k$ (with  bounds that depend on $\Vert c\Vert_{\Cc^k}$ only). If $N=2$, we simply have
\[D_t G_1(\bphi, t)(x_1)= \xi_1(\bphi, t)(x_1) \int_{\Xx_{2}} e^{-c(x_1,x_2)+\phi_2 (x_2)} \d (\nu_2-\mu_2)(x_2). \]
By Leibniz formula to bound the $k$ first derivatives of $D_t G_1(\bphi^t, t)(x_1)$, we then just have to bound the $k$ first derivatives of $x_1 \in \Xx_1 \mapsto \int_{\Xx_{2}} e^{-c(x_1,x_2)+\phi_2^t (x_2)} \d (\nu_2-\mu_2)(x_2)$  which are obviously controlled by $\Vert \nu_2 -\mu_2\Vert_{H^{-p}}$ times the $\Cc^{k+p}$ norm of 
$(x_1, x_2) \mapsto e^{-c(x_1,x_2)+\phi_2^t (x_2)}$ which can in turn be bounded by a constant only depending on $\Vert c \Vert_{\Cc^{k+p}}$. Proceeding in the same way for $x_2 \mapsto D_t G_2(\bphi^t, t)(x_2)$ gives the desired result. The case $N\geq 3$ is slightly  more tedious to write, for a fixed pair of indices $i\neq j$, we denote by $\Xx_{-(i,j)}$ the cartesian product of all the $\Xx_l$ but $i$ and $j$,  and write $x\in \Xx$ as $x=(x_i, x_j, x_{-(i,j)}) \in \Xx_i \times \Xx_j \times \Xx_{-(i,j)}$, likewise we write $\mu_{-(i,j)}^t$ for the tensor product of $\mu_l^t$ for $l\neq i$, $l\neq j$. Doing so, we have 
\[D_t G_i(\bphi, t)(x_i)= \xi_i(\bphi, t)(x_i) \sum_{j\neq i} \int_{\Xx_j} h_{ij}(\bphi, t) (x_i, x_j ) \d (\nu_j-\mu_j)(x_j)\]
where
\[h_{ij}(\bphi, t) (x_i, x_j ):=\int_{\Xx_{-(i,j)}} e^{-c(x_i, x_j, x_{-(i,j)})+ \phi_j(x_j)+ \sum_{l\neq i, l\neq j} \phi_l(x_l)} \d\mu^{t}_{-(i,j)}(x_{-(i,j)}) \]
so that by the same arguments as before, if $\alpha\in \NN^d$ with $\vert \alpha \vert\leq k$, we have for a constant $C$ only depending on $N$ and the $\Cc^{k+p}$ norm of $c$, possibly varying from one line to another:
\[  \vert \partial^\alpha D_t G_i(\bphi^t, t)(x_i) \vert \leq C \sum_{j\neq i} \Vert \mu_j -\nu_j \Vert_{H^{-p}(\Xx_j)} \Vert \partial^\alpha_{x_i} h_{ij}(\bphi^t, t)(x_i, .) \Vert_{H^p(\Xx_j)} 
\leq C \Vert \bmu-\bnu \Vert_{H^{-p}}.
\]
This enables us to conclude exactly as in the end of paragraph \ref{proofLipW2}.

}

\section{Smoothness of entropic optimal transport and Wasserstein gradient flows}\label{sec:applications}
In this section, we apply our main stability results to the analysis of Wasserstein gradient flows of functionals involving the entropic optimal transport functional $E$.

\subsection{Displacement smoothness and gradient flows}

Let $\bmu^0,\bmu^1,\gamma, \bmu^t$ and $\bphi^t$ be as in the beginning of paragraph \ref{sec-main-result}. A consequence of Thm.~\ref{thm:main} is that the functional $E$ is as nice as one could hope for in the Wasserstein space.

\begin{theorem}\label{thm:smoothness}
If $c\in \Cc^{2k-1}(\Xx)$ for some $k\geq 1$, then the function $t\mapsto E(\bmu^t)$ is of class $\Cc^k$. Moreover, if $c\in \Cc^{2}(\Xx)$ then its derivative is $C\cost(\gamma)$-Lipschitz, for some $C>0$ that only depends on $N$ and $\Vert c\Vert_{\Cc^2}$. In particular both $E$ and $-E$ are $(-C)$-displacement convex.
\end{theorem}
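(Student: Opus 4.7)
The plan is to derive a clean formula for $h'(t)$ via an envelope-type cancellation on the dual, then invoke Thm.~\ref{thm:main} for both the $\Cc^k$ regularity and the second-order Lipschitz estimate. Starting from the dual representation $h(t) = \sum_i \int \phi_i^t\, d\mu_i^t + 1 - \int e^{\sum_j \phi_j^t - c}\, d\mu^t$ and differentiating in $t$ (legitimate since $\bphi^t \in \Cc^1([0,1]; \tilde\Cc^1)$ by Thm.~\ref{thm:main} with $p=k=1$), the $\partial_t \phi_i^t$ contributions cancel between the two summands thanks to the Schrödinger identity $\int_{\Xx_{-i}} e^{\sum_j \phi_j^t - c}\, d\mu_{-i}^t = 1$, while the partial $t$-derivative of the exponential term \emph{at frozen $\bphi^t$} also vanishes, thanks to the formula $\nabla \phi_i^t(x_i) = \int \nabla_i c(x_i,\cdot)\, dQ_{-i}(\cdot|x_i)$ from Prop.~\ref{prop:preliminaries}. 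One is left with
\[
h'(t) = \sum_{i=1}^N \int_{\Xx_i \times \Xx_i} \nabla \phi_i^t\bigl((1-t)x_i + ty_i\bigr) \cdot (y_i - x_i)\, d\gamma_i(x_i,y_i). \quad (\star)
\]
This step is the main technical delicacy: several candidate formal computations for $h'$ exist, and verifying that both the $\partial_t\bphi^t$-terms and the derivative-of-exponential contribution vanish simultaneously --- each via a distinct Schrödinger-type identity --- is what isolates the clean expression $(\star)$.

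For the $\Cc^k$ regularity, assume $c \in \Cc^{2k-1}$. Applying Thm.~\ref{thm:main} with $p = k-1$ and its internal regularity parameter equal to $k$ (valid since $p \leq k$ and $k + p = 2k-1$), we obtain $t \mapsto \bphi^t \in \Cc^{k-1}([0,1]; \tilde\Cc^k)$. Differentiating $(\star)$ inductively by the chain rule produces a finite linear combination of terms of the form $\partial_t^a \nabla^{1+b} \phi_i^t(x_i^t) \cdot (y_i - x_i)^{\otimes(1+b)}$ with $a + b \leq k-1$, each continuous and uniformly bounded in $(t, x_i, y_i)$. Hence $h' \in \Cc^{k-1}([0,1])$, i.e.\ $h \in \Cc^k([0,1])$.

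For the Lipschitz estimate under $c \in \Cc^2$, write $h'(t) - h'(s) = \sum_i \int [\nabla \phi_i^t(x_i^t) - \nabla \phi_i^s(x_i^s)] \cdot (y_i - x_i)\, d\gamma_i$ and split $\nabla \phi_i^t(x_i^t) - \nabla \phi_i^s(x_i^s) = [\nabla \phi_i^t - \nabla \phi_i^s](x_i^t) + [\nabla \phi_i^s(x_i^t) - \nabla \phi_i^s(x_i^s)]$. The first bracket has sup-norm at most $\|\bphi^t - \bphi^s\|_{\tilde\Cc^1} \leq C|t-s|\sqrt{\cost(\gamma)}$ by Thm.~\ref{thm:main} with $k=1$; the second is bounded pointwise by $\|\phi_i^s\|_{\Cc^2}\, |t-s|\, |y_i - x_i|$, with $\|\phi_i^s\|_{\Cc^2}$ controlled in terms of $\|c\|_{\Cc^2}$ and $N$ alone (differentiate once more the formula $\nabla \phi_i^s = \int \nabla_i c\, dQ_{-i}(\cdot|x_i)$ and use the uniform bound on $\|\bphi^s\|_{\tilde\Cc^1}$ from Prop.~\ref{prop:preliminaries}). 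Using $\sum_i \int |y_i - x_i|\, d\gamma_i \leq \sqrt{N \cost(\gamma)}$ (Cauchy-Schwarz) and $\sum_i \int |y_i - x_i|^2\, d\gamma_i = \cost(\gamma)$, we obtain $|h'(t) - h'(s)| \leq C|t-s|\cost(\gamma)$. Consequently $|h''| \leq C\cost(\gamma)$ a.e.; choosing $\gamma_i$ as the $W_2$-optimal plan gives $\cost(\gamma) = \bW(\bmu^0,\bmu^1)^2$, so both $t \mapsto h(t) \pm \tfrac{C}{2}\bW(\bmu^0,\bmu^1)^2 t^2$ are convex, which is exactly the $(-C)$-displacement convexity of $E$ and $-E$.
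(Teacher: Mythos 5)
Your overall plan — derive the clean formula $(\star)$ for $h'(t)$ by an envelope-type cancellation on the dual, then read off higher regularity and the Lipschitz estimate from Thm.~\ref{thm:main} — is the same as the paper's. The formula $(\star)$ agrees with Eq.~\eqref{derivet}, your identification of the two distinct Schr\"odinger-type cancellations (one via $\int_{\Xx_{-i}}e^{\sum_j\phi_j^t-c}\,\d\mu_{-i}^t=1$ for the $\partial_t\bphi^t$-terms, one via $\nabla\phi_i^t(x_i)=\int\nabla_i c(x_i,\cdot)\,\d Q_{-i}(\cdot|x_i)$ for the frozen-potential exponential term) is correct, and your second-order splitting for the Lipschitz estimate matches the paper's. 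Your way of killing the exponential term is actually a touch cleaner: the paper instead decomposes $\otimes_i\mu_i^s-\otimes_i\mu_i^t$ and uses the zeroth-order identity plus a Kantorovich--Rubinstein argument to show the remainder is $o(|s-t|)$.

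There is, however, a genuine gap at $k=1$. You justify the chain-rule decomposition $h'(t)=D_\bphi f(\bphi^t,t)[\partial_t\bphi^t]+\partial_t f(\bphi^t,t)$ by invoking Thm.~\ref{thm:main} with $p=k=1$, which requires $c\in\Cc^2$ so that $\partial_t\bphi^t$ exists. But the first claim of the theorem, for $k=1$, asserts $h\in\Cc^1$ under the \emph{weaker} assumption $c\in\Cc^1$. Your later application ``$p=k-1$'' degenerates to $p=0\notin\NN^*$ in that case, so Thm.~\ref{thm:main} cannot be invoked there either. The paper sidesteps this by never differentiating $\bphi^t$: it uses the two-sided (Danskin-style) convexity inequality~\eqref{eq:convexity-differentiability}, which requires only that $t\mapsto\bphi^t$ be continuous in $\tilde\Cc^1$ — guaranteed by Prop.~\ref{prop:preliminaries} under $c\in\Cc^1$. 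The paper explicitly flags this: ``the case $k=1$ is instead a consequence of Prop.~\ref{prop:preliminaries}.'' To close the gap, either replace the chain-rule argument with the convexity-sandwich argument, or phrase it as a one-sided envelope (Danskin) theorem that does not presuppose differentiability of the argmax. The remainder of your argument — the inductive differentiation of $(\star)$ for $k\geq2$, the decomposition $\nabla\phi_i^t(x_i^t)-\nabla\phi_i^s(x_i^s)$, and the passage from $|h''|\leq C\cost(\gamma)$ to displacement semi-convexity of $\pm E$ — is correct and matches the paper.
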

\begin{proof}
It follows from the dual formulation \eqref{eq:dual-MEOT} that $\bmu \mapsto E(\bmu)$ is a convex function of $\otimes_i \mu_i$ (eventhough $E$ is not convex) by optimality of $\bphi^t$  in this dual formulation $E$, setting $V_t(x):=e^{-c(x)+ \sum_{i=1}^N \phi_i^t(x_i)}$ it holds that for every $t$ and $s$ in $[0,1]$, one has
\begin{equation}\label{eq:convexity-differentiability}
\sum_i \int_{\Xx_i}\phi_i^t (\mu_i^{s}-\mu_i^t) -\int_{\Xx} V_t (\otimes_i \mu_i^s-\otimes_i \mu_i^t)\leq  E(\bmu^s)-E(\bmu^t) \leq \sum_i \int_{\Xx_i}\phi_i^s (\mu_i^{s}-\mu_i^t) -\int_{\Xx} V_s (\otimes_i \mu_i^s-\otimes_i \mu_i^t)
\end{equation}
Using the notation $x_i^t=(1-t)x_i+ty_i$ as before, remark that 
\begin{align*}
\int_{\Xx_i} \phi_i^t\d (\mu_i^{s}-\mu_i^t) &= \int_{\Xx_i^2} \big(\phi_i^t(x_i^{s})-\phi_i^t(x_i^t)\big)\d\gamma_i(x_i,y_i)\\
&= (s-t) \int_{\Xx_i^2} (y_i-x_i)^\top \nabla \phi_i^t(x_i^t)\d\gamma_i(x_i,y_i)+o(\vert s-t\vert).
\end{align*}
We now claim that the second term in the left hand side of \eqref{eq:convexity-differentiability} is $o(\vert s-t\vert)$. To prove this, we shall for notational simplicity restrict ourselves to the case $N=2$ (the general case is similar but  more tedious),
\[\begin{split}
    \int_{\Xx} V_t (\mu_1^s\otimes \mu_2^s- \mu_1^t\otimes \mu_2^t)= \int_{\Xx} V_t (\mu^t_{1} \otimes (\mu^s_{2}-\mu^t_2)+ (\mu^s_1-\mu^t_1)\otimes \mu_2^t)+ \int_{\Xx} V_t (\mu_1^s-\mu_1^t) \otimes (\mu_2^s-\mu_2^t)
\end{split}\]
the first term in the right-hand side is $0$ because the integral of $V_t(., x_2)$ (respectively $V_t(x_1, .)$ with respect to $\mu_1^t$ (respectively $\mu_2^t$) is constant equal to $1$ and $\mu_2^s$ and $\mu_2^t$ (respectively $\mu_1^s$ and $\mu_1^t$) have the same total mass.
We are therefore left to show that the second term is $o(\vert t-s\vert)$. Defining for $x_1\in \Xx_1$, $\xi_{s,t}(x_1):=\int_{\Xx_2}V_t(x_1,.) (\mu_2^s-\mu_2^t)$ and observing that since the $1$-Wasserstein distance between $\mu_2^s$ and $\mu_2^t$ is bounded by $M\vert t-s\vert$ where $M$  is the diameter of $\Xx_2$,  it follows from the Kantorovich-Rubinstein inequality that 
\[\vert\int_{\Xx} V_t (\mu_1^s-\mu_1^t) \otimes (\mu_2^s-\mu_2^t) \vert \leq M \vert t-s\vert\ \Vert \nabla \xi_{s,t} \Vert_{\Cc^0(\Xx_1)}.\]
Writing $\nabla \xi_{s,t}(x_1)$ as
\[\nabla \xi_{s,t} (x_1)=\nabla \phi_1^t(x_1) \xi_{t,s}(x_1)-\int_{\Xx_2} V_t(x_1,.) \nabla_{x_1} c(x_1,.) (\mu_2^s-\mu_2^t)\]
we deduce from the uniform continuity of $\nabla_{x_1} c$ and $V_t$ and the weak $*$ continuity of $s\mapsto \mu_2^s$ that $\nabla \xi_{s,t}$ converges uniformly to $0$ as $s\to t$, hence that 
\[ \int_{\Xx} V_t (\mu_1^s-\mu_1^t) \otimes (\mu_2^s-\mu_2^t)=o(\vert s-t\vert).\]
Thus dividing Eq.~\eqref{eq:convexity-differentiability} by $\vert s-t\vert$ and using that $\bphi^{s}\to \bphi^t$ in $\tilde \Cc^1$ as $s$ tends to $t$, we get:
\begin{align}\label{derivet}
     \frac{\d}{\d t} E(\bmu^t) = \sum_{i=1}^N \int_{\Xx_i \times \Xx_i} (y_i-x_i)^\top \nabla \phi^t_i(x^t_i)\d\gamma_i(x_i,y_i).
\end{align}
From Thm.~\ref{thm:main}, we know that $t\mapsto \bphi^t$ is in $\Cc^{k-1}([0,1],\tilde \Cc^{k})$ (note that the case $k=1$ is instead a consequence of Prop.~\ref{prop:preliminaries}) and hence $t\mapsto((x_i,y_i)\mapsto \nabla \bphi^t_i(x_i^t))$ is in $\Cc^{k-1}([0,1],\Cc^{0}(\Xx_i\times \Xx_i))$.

It follows that $h:t\mapsto E(\bmu^t)\in \Cc^k([0,1])$.  Notice how this argument uses the two notions of regularity of the Schr\"odinger map (indexed by $p$ and $k$ in Thm.~\ref{thm:main}).

For the Lipschitz regularity of $h'$, fixing $s,t\in [0,1]$, one has 
\begin{align*}
    \vert h'(t)-h'(s)\vert 
    &\leq \left\vert \sum_{i=1}^N \int_{\Xx_i \times \Xx_i} (y_i-x_i)^\top \big(\nabla\phi^t_i(x^t_i) - \nabla\phi^s_i(x^s_i)\big) \d\gamma_i(x_i,y_i)\right\vert \\
    &\leq \sum_{i=1}^N \int_{\Xx_i \times \Xx_i}  \Vert y_i - x_i \Vert \Vert \nabla\phi^t_i(x^t_i) - \nabla\phi^t_i(x^s_i) \Vert  \d\gamma_i(x_i,y_i)  \\
    &\qquad + \sum_{i=1}^N \int_{\Xx_i \times \Xx_i}  \Vert y_i - x_i \Vert \Vert  \nabla\phi^t_i(x^s_i) - \nabla\phi^s_i(x^s_i) \Vert \d\gamma_i(x_i,y_i).  
\end{align*}
Now, if $c\in \Cc^2(\Xx)$, using the Lipschitz regularity of $x_i\mapsto \nabla \phi_i^t(x_i)$ and of $t\mapsto \nabla \bphi^t$, from Thm.~\ref{thm:main}, it follows that
\[
\Vert\nabla\phi^t_i(x^t_i) - \nabla\phi^t_i(x^s_i) \Vert \leq C | t-s | \Vert y_i - x_i \Vert \text{  and  } \Vert \nabla\phi^t_i(x^s_i) - \nabla\phi^s_i(x^s_i) \Vert \leq C |t-s| \sqrt{\cost(\gamma)}, 
\]
for some $C$ that only depends on $N$ and $\Vert c\Vert_{\Cc^2}$. Then, we obtain
\begin{align*}
    \vert h'(t)-h'(s)\vert & \leq C |t-s| \sum_{i=1}^N \int_{\Xx_i \times \Xx_i}  \Vert y_i - x_i \Vert^2  \d\gamma_i(x_i,y_i)  \\
    &\qquad +  C|t-s|\sqrt{\cost(\gamma)} \sum_{i=1}^N \int_{\Xx_i \times \Xx_i}  \Vert y_i - x_i \Vert  \d\gamma_i(x_i,y_i)  \\
    & \leq C |t-s| \cost(\gamma).
\end{align*}
 
In particular, this implies that $t\in [0,1]\mapsto h(t)+ \frac{C \cost(\gamma)}{2} t^2$ is convex hence
\[ E(\bmu^t) \leq (1-t)  E(\bmu^0)+ t  E(\bmu^1) +\frac{C \cost(\gamma)t(1-t)}{2}\]
and displacement semi-convexity follows by choosing $\gamma_i$ to be an optimal transport plan between $\mu_i^0$ and $\mu_i^1$ for each $i\in [N]$ (see~\cite{ambrosio2005gradient} for a definition). Displacement semi-convexity of $-E$ is obtained in the same way, observing that $t\in [0,1]\mapsto -h(t)+ \frac{C\cost(\gamma)}{2} t^2$ is convex.
\end{proof}

\begin{proposition}
If $c\in \Cc^1(\Xx)$, we have that $S(\bmu)$ is the gradient of $\bmu\mapsto E(\bmu)$ and $x\mapsto \nabla_x S(\bmu)(x)$ is its Wasserstein gradient, in the sense of~\cite{ambrosio2005gradient} i.e. for $\bmu^0$ and $\bmu^1$ in  $\prod_{i=1}^N\Pp(\Xx_i)$ and for any $\gamma_i$ optimal plan between $\mu^0_i$, and $\mu_i^1$, one has
\[E(\bmu^1)-E(\bmu^0)= \sum_{i=1}^N \int_{\Xx_i \times \Xx_i} (y_i-x_i)^\top \nabla\phi_i(x_i) \d\gamma_i(x_i,y_i) +o(\bW(\bmu^0, \bmu^1)) \]
where  $\bphi:=S(\bmu^0)$. If $c\in \Cc^2(\Xx)$ then the error  $o(\bW(\bmu^0, \bmu^1))$ is in fact $O(\bW(\bmu^0, \bmu^1)^2)$.
\end{proposition}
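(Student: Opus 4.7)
The plan is to deduce the proposition directly from Theorem 3.1. Choose $\gamma_i$ optimal between $\mu_i^0$ and $\mu_i^1$ for each $i$, so that $\cost(\gamma) = \bW(\bmu^0,\bmu^1)^2$. Form the displacement interpolation $\bmu^t$ along these plans and let $\bphi^t := S(\bmu^t)$. Under $c\in \Cc^1(\Xx)$, Theorem 3.1 (with $k=1$) applies and yields that $h(t):=E(\bmu^t)$ belongs to $\Cc^1([0,1])$ with
\[
 h'(t) = \sum_{i=1}^N \int_{\Xx_i\times\Xx_i} (y_i-x_i)^\top \nabla \phi_i^t(x_i^t)\,\d\gamma_i(x_i,y_i), \qquad x_i^t=(1-t)x_i+ty_i.
\]
The fundamental theorem of calculus then gives
\[
 E(\bmu^1)-E(\bmu^0) = h'(0) + \int_0^1\!\!\bigl(h'(t)-h'(0)\bigr)\d t,
\]
and $h'(0) = \sum_i\int (y_i-x_i)^\top \nabla\phi_i(x_i)\,\d\gamma_i$ with $\bphi=\bphi^0=S(\bmu^0)$ is exactly the claimed leading-order term. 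Everything reduces to controlling the remainder $R:=\int_0^1(h'(t)-h'(0))\,\d t$.

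Under the stronger hypothesis $c\in \Cc^2(\Xx)$, the argument is immediate: the quantitative Lipschitz estimate on $h'$ already established in the proof of Theorem 3.1 reads $|h'(t)-h'(0)|\le C\,t\,\cost(\gamma)$ with $C$ depending only on $N$ and $\Vert c\Vert_{\Cc^2}$. Integrating gives $|R|\le \tfrac{C}{2}\cost(\gamma) = \tfrac{C}{2}\bW(\bmu^0,\bmu^1)^2$, which is the advertised $O(\bW^2)$ error.

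For the weaker $\Cc^1$ case no Lipschitz estimate on $h'$ is available, so I plan to argue by a softer, compactness-based estimate. Cauchy--Schwarz in $L^2(\gamma_i)$ yields
\[
 |h'(t)-h'(0)| \le \bW(\bmu^0,\bmu^1)\cdot \bigg(\sum_{i=1}^N\!\int \Vert\nabla\phi_i^t(x_i^t)-\nabla\phi_i^0(x_i)\Vert^2\d\gamma_i\bigg)^{\!1/2}.
\]
Split the bracket into $[\nabla\phi_i^t(x_i^t)-\nabla\phi_i^0(x_i^t)]+[\nabla\phi_i^0(x_i^t)-\nabla\phi_i^0(x_i)]$. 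The first piece tends to zero uniformly on $\Xx_i$ and in $t\in[0,1]$ as $\bW(\bmu^0,\bmu^1)\to 0$, because $W_2(\mu_i^t,\mu_i^0)\le t\,\bW(\bmu^0,\bmu^1)$ so $\bmu^t\to\bmu^0$ uniformly in $t$, and $S:\prod_i\Pp(\Xx_i)\to\tilde\Cc^1$ is continuous by Proposition 2.5. The second piece is bounded by $\omega_i(t\Vert y_i-x_i\Vert)\le \omega_i(\Vert y_i-x_i\Vert)$ where $\omega_i$ is the modulus of continuity of $\nabla\phi_i^0$ on the compact set $\Xx_i$; its $L^2(\gamma_i)$-norm tends to $0$ as $\bW\to 0$ by dominated convergence, since $\omega_i$ is bounded with $\omega_i(0^+)=0$ and $\Vert y_i-x_i\Vert\to 0$ in $L^2(\gamma_i)$. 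Combining the two pieces delivers $R=o(\bW(\bmu^0,\bmu^1))$.

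The main obstacle is precisely this $\Cc^1$ case: the $\Cc^2$ case is a one-line consequence of Theorem 3.1, but under mere $\Cc^1$ regularity one has to harness two independent compactness arguments (uniform continuity of $S$ into $\tilde\Cc^1$, and uniform continuity of $\nabla\phi_i^0$ on $\Xx_i$) to upgrade the trivial $O(\bW)$ bound on $R$ to $o(\bW)$.
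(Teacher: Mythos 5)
Your proof is correct and follows essentially the same route as the paper: integrate the derivative formula $h'(t)=\sum_i\int(y_i-x_i)^\top\nabla\phi_i^t(x_i^t)\,\d\gamma_i$ (equation \eqref{derivet}, established in Thm.~\ref{thm:smoothness}) in time, then control $h'(t)-h'(0)$ via the Lipschitz estimate of Cor.~\ref{cor:lipschitz-multi} when $c\in\Cc^2$, and via the weak continuity of $S$ into $\tilde\Cc^1$ from Prop.~\ref{prop:preliminaries}(ii) when $c\in\Cc^1$. The paper states this in one line; your write-up usefully fills in the Cauchy--Schwarz splitting and the modulus-of-continuity argument that make the $o(\bW)$ bound in the $\Cc^1$ case rigorous.
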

\begin{proof}
For the case $c\in \Cc^1(\Xx)$, this follows by integrating~\eqref{derivet} in time and Proposition~\ref{prop:preliminaries}-(ii) which guarantees that $S$ is weakly continuous as a function in $\tilde \Cc^1$. For the case $c\in \Cc^2(\Xx)$, this follows from~\eqref{derivet} and  Thm.~\ref{thm:main}.
\end{proof}
 Thm. \ref{thm:smoothness} and the above identification of the Wasserstein gradient of $E$ enable us to deduce from~\cite[Thm.~11.2.1]{ambrosio2005gradient} that $E$ admits a unique Wasserstein gradient flow, which shows well-posedness of the Cauchy problem for the system of PDEs
 \[
 \left\{
 \begin{aligned}
 \partial_t \mu_i &= \nabla \cdot (\mu_i \nabla S_i(\bmu)),\quad i\in \{1, \ldots, N\}\\
 \bmu\vert_{t=0}&= \bmu^0
 \end{aligned}
 \right.
 \]
This system, as all the PDEs below, is understood in the sense of distributions with no-flux boundary conditions, i.e.~for $i\in\{1,\dots,N\}$, for every $\psi\in \Cc^\infty_c([0,+\infty)\times \RR^d)$ it holds
 $$
 \int_0^\infty \int_{\Xx_i} \big(\partial_t \psi(t,x_i) + \nabla S_i(\bmu)(x_i)^\top \nabla_{x_i} \psi(t,x_i)\big)\d\mu^t_i(x_i)\d t=-\int_{\Xx_i} \psi(0,x_i)\d\mu_i^0(x_i).
 $$
We also have that the fact that the gradient flow map $\bmu^0 \mapsto \bmu^t$ satisfies
\[\bW(\bmu^t, \bnu^t)^2 \leq e^{C t} \bW(\bmu^0, \bnu^0)^2.\]
Of course, adding to $E$ a separable term of the form $\sum_{i=1}^N E_i(\mu_i)$ where each $E_i$ is displacement semi-convex, we can deduce well-posedness for more general systems  like
 
 \[\partial_t \mu_i - \alpha_i \Delta \mu_i- \nabla \cdot (\mu_i \nabla S_i(\bmu))=0, i=1, \ldots, N, \quad \bmu\vert_{t=0}= \bmu^0\]
or 
 \[\partial_t \mu_i - \alpha_i \Delta \mu_i^{m_i}- \nabla \cdot (\mu_i \nabla S_i(\bmu))=0, i=1, \ldots, N, \quad \bmu\vert_{t=0}= \bmu^0\]
 with $m_i\geq 1$ and $\alpha_i \geq 0$. For the sake of concreteness, we are going to detail three such examples with interesting additional structure in the next paragraphs.

\subsection{Wasserstein gradient flow of the Sinkhorn divergence}

We consider the Sinkhorn divergence functional~\cite{feydy2019interpolating}, the gradient flow of which has been previously considered as a numerical method for density fitting. As a consequence of our analysis and of~\cite[Thm. 11.2.1]{ambrosio2005gradient} we have the following result.

\begin{proposition}
Let $\Xx\subset \RR^d$ be a compact convex set, $c\in \Cc^2(\Xx\times \Xx)$ and let $\mu^0,\nu\in \Pp(\Xx)$. There exists a unique Wasserstein gradient flow starting from $\mu^0$ of the Sinkhorn divergence (from $\nu$) functional 
$$
\mu \mapsto E(\mu,\nu) -\frac12 E(\mu,\mu) -\frac12 E(\nu,\nu).
$$
\end{proposition}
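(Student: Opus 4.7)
The plan is to apply the standard existence and uniqueness theorem for Wasserstein gradient flows of functionals that are semi-convex along generalized geodesics, namely~\cite[Thm.~11.2.1]{ambrosio2005gradient}, to the Sinkhorn divergence
\[
F(\mu) \coloneqq E(\mu,\nu) - \tfrac{1}{2} E(\mu,\mu) - \tfrac{1}{2} E(\nu,\nu).
\]
The last term is constant in $\mu$ and may be ignored. Weak continuity and boundedness of $F$ on the (weakly compact) set $\Pp(\Xx)$ follow directly from Prop.~\ref{prop:preliminaries} combined with the dual formulation~\eqref{eq:dual-MEOT}, so $F$ is proper and l.s.c. The only nontrivial task is to verify semi-convexity along generalized geodesics.

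To that end, I would fix a base measure $\mu^*\in\Pp(\Xx)$ together with two targets $\mu^0,\mu^1\in\Pp(\Xx)$, and pick a 3-plan $\boldsymbol{\gamma}\in\Pp(\Xx^3)$ whose $(1,2)$ and $(1,3)$ marginals are optimal transport plans from $\mu^*$ to $\mu^0$ and $\mu^*$ to $\mu^1$ respectively. Setting $\gamma\coloneqq(\pi^2,\pi^3)_\#\boldsymbol{\gamma}\in\Pi(\mu^0,\mu^1)$ (in general \emph{not} optimal) and $\mu^t\coloneqq((1-t)\pi^2+t\pi^3)_\#\boldsymbol{\gamma}$, the curve $(\mu^t)_{t\in[0,1]}$ is precisely the displacement interpolation driven by the plan $\gamma$ appearing in Thm.~\ref{thm:main}, and $\cost(\gamma)=W^2_{\boldsymbol{\gamma}}(\mu^0,\mu^1)$ in the notation of~\cite{ambrosio2005gradient}. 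For the cross term $\mu\mapsto E(\mu,\nu)$ I would apply Thm.~\ref{thm:smoothness} to the two-marginal curve $\bmu^t=(\mu^t,\nu)$ with transport plans $(\gamma,(\id,\id)_\#\nu)$; for the diagonal term $\mu\mapsto -\tfrac12 E(\mu,\mu)$ I would apply it to $\bmu^t=(\mu^t,\mu^t)$ with transport plans $(\gamma,\gamma)$, using crucially that \emph{both} $E$ and $-E$ are displacement semi-convex according to Thm.~\ref{thm:smoothness}. Summing the two $\Cc^{1,1}$ estimates and integrating twice in $t$ yields $(-C)$-convexity of $F$ along the generalized geodesic $(\mu^t)$, for some $C>0$ depending only on $\Vert c\Vert_{\Cc^2}$.

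The main (and actually rather mild) subtlety is that $\gamma$ is \emph{not} the optimal plan between $\mu^0$ and $\mu^1$, so a priori $\cost(\gamma)\geq W_2^2(\mu^0,\mu^1)$; however, this is exactly what the generalized-geodesic notion of semi-convexity is designed to accommodate, and the conclusion of Thm.~\ref{thm:main} (hence of Thm.~\ref{thm:smoothness}) holds for an \emph{arbitrary} transport plan, so the previous step goes through with $W^2_{\boldsymbol{\gamma}}$ in place of $W_2^2$. Once semi-convexity along generalized geodesics is established, \cite[Thm.~11.2.1]{ambrosio2005gradient} directly provides existence and uniqueness of the Wasserstein gradient flow of $F$ starting from $\mu^0$.
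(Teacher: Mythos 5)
Your proof is correct and follows the same route the paper takes (the paper's exposition of this proposition is terse, essentially citing Thm.~\ref{thm:smoothness} and~\cite[Thm.~11.2.1]{ambrosio2005gradient}). You make explicit one point that the paper leaves implicit, namely that the hypothesis of AGS's well-posedness theorem is semi-convexity along \emph{generalized} geodesics, and that this is indeed available because Thm.~\ref{thm:main} and hence Thm.~\ref{thm:smoothness} give the $C\cost(\gamma)$-Lipschitz estimate for \emph{arbitrary} transport plans $\gamma$, not only optimal ones; your choice of $(\gamma,(\id,\id)_\#\nu)$ for the cross term and $(\gamma,\gamma)$ for the diagonal term, combined with the semi-convexity of both $E$ and $-E$, is exactly what is needed.
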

Here, a Wasserstein gradient flow is a curve $(\mu^t)_{t\geq 0}\in \Pp(\Xx)$ that is absolutely continuous for the $W_2$ metric and that satisfies
\begin{align}
\partial_t \mu^t = \nabla \cdot (v^t\mu^t), && v^t = \nabla S_1(\mu^t,\nu) - \frac{1}{2}(\nabla S_1(\mu^t,\mu^t)+\nabla S_2(\mu^t,\mu^t) ),&& \mu\vert_{t=0}=\mu^0
\end{align}
where we recall that $S$ is the Schr\"odinger map.

An interesting open question is whether this dynamics can be provably shown to converge to the unique minimizer $\mu^*$, which is $\mu^*=\nu$ for suitable choices of costs, e.g.~for $c(x,y)= \Vert y-x\Vert^2$, as proved in~\cite{feydy2019interpolating}. 

\subsection{Convergence to equilibrium for the Schr\"odinger bridge energy}

Let us continue with another simple example that shows that our theory is also natural to deal with the Lebesgue measure as a reference in the definition of $E$ Eq.~\eqref{eq:EOT}, which is the original definition of the Schr\"odinger bridge problem. This alternative definition is equivalent (see e.g.~\cite{marino2020optimal}) to considering $E+H$
where $H(\mu)\coloneqq \int \log(\mu)\d\mu$ if $\mu$ is absolutely continuous and $+\infty$ otherwise is minus the differential entropy.

\begin{proposition}\label{prop: GF + equilibrium 1 species}
Let $\Xx\subset \RR^d$ be a compact convex set, $c\in \Cc^2(\Xx\times \Xx)$ and let $\mu^0,\nu\in \Pp(\Xx)$. There exists a unique Wasserstein gradient flow of 
$$
\mu \mapsto E(\mu,\nu) + H(\mu)
$$
starting from $\mu^0$. Moreover, if $H(\mu^0)<\infty$ then this gradient flow converges at an exponential rate to the unique global minimizer $\mu^*$. Specifically, there exists $\kappa>0$ independent of $\mu^0,\nu$ such that
$$
F(\mu^t)-F(\mu^*)\leq e^{-\kappa t}(F(\mu^0)-F(\mu^*)).
$$
In addition, there exists a constant $C>0$, independent of $\mu^0$, $\nu$ such that 
 \[
 W_2(\mu^t, \mu^*)^2 \leq Ce^{-\kappa t}(F(\mu^0) - F(\mu^*)).
 \]
\end{proposition}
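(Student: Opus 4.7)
The existence and uniqueness of the Wasserstein gradient flow of $F$ would follow from \cite[Thm.~11.2.1]{ambrosio2005gradient}: since $E(\cdot,\nu)$ is $(-C)$-displacement semi-convex by Thm.~\ref{thm:smoothness} and $H$ is $0$-displacement convex on the convex set $\Xx$ by McCann's theorem, $F$ is displacement semi-convex, and the continuity equation in the statement is the associated gradient flow PDE.

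To identify $\mu^*$, I would use the identity $H(\gamma|\mu\otimes\nu) = H(\gamma) - H(\mu) - H(\nu)$ valid for any absolutely continuous $\gamma \in \Pi(\mu,\nu)$, which rewrites $F(\mu) + H(\nu) = \min_{\gamma\in \Pi(\mu,\nu)}(\int c\,d\gamma + H(\gamma))$. Minimizing in $\mu$ thus amounts to the Schr\"odinger-type problem $\inf_{\gamma : \pi^2\gamma = \nu}(\int c\,d\gamma + H(\gamma))$ with free first marginal, whose unique solution is the Gibbs plan $\gamma^*(x_1,x_2) = \nu(x_2)e^{-c(x_1,x_2)}/\int e^{-c(x_1',x_2)}\,dx_1'$. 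Its first marginal $\mu^*$ is precisely~\eqref{defdemustar}, and has density bounded above and below by positive constants depending only on $\Vert c\Vert_{\Cc^0}$ and $\Xx$.

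The pivotal step is the energy--entropy comparison $H(\mu|\mu^*) \leq F(\mu) - F(\mu^*) \leq C_\sharp\, H(\mu|\mu^*)$ for some $C_\sharp$ independent of $\mu$ and $\nu$. Setting $V := S_1(\mu^*,\nu)$, so that $\log\mu^* = -V - \log Z$, convexity of $\mu\mapsto E(\mu,\nu)$ (a supremum of affine functions by the dual~\eqref{eq:dual-MEOT}) applied at $\mu^*$ with subgradient $V$ yields the lower bound, since the $V$-linear terms cancel with those coming from $H(\mu) - H(\mu^*)$. Applying the same convexity at $\mu$ with subgradient $S_1(\mu,\nu)$ produces the upper bound, in which the residual term $\int (S_1(\mu,\nu) - V)\,d(\mu - \mu^*)$ is controlled by the Lipschitz estimate $\Vert \nabla(S_1(\mu,\nu) - V)\Vert_\infty \leq L\, W_2(\mu,\mu^*)$ from Thm.~\ref{thm:main-2marginals}, Pinsker's inequality, and Talagrand's $T_2$ inequality for $\mu^*$ (with uniform constant by Holley--Stroock perturbation of the uniform measure on $\Xx$). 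The lower bound already implies uniqueness of $\mu^*$.

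Finally, for exponential convergence I would compute the dissipation
\[ -\tfrac{d}{dt}F(\mu^t) = \int \bigl|\nabla\log(\mu^t/\mu^*) + \nabla U(\mu^t)\bigr|^2 d\mu^t, \quad U(\mu) := S_1(\mu,\nu) - V, \]
and apply a Young inequality together with the uniform log-Sobolev inequality $H(\cdot|\mu^*) \leq (2\lambda)^{-1}I(\cdot|\mu^*)$ (Holley--Stroock again), Talagrand, and Thm.~\ref{thm:main-2marginals} to obtain $-\frac{d}{dt}F(\mu^t) \geq \kappa_0 H(\mu^t|\mu^*)$ for some $\kappa_0 > 0$. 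Combined with the upper bound $F - F^* \leq C_\sharp H$ of the previous paragraph, this produces the \L ojasiewicz-type inequality $-\frac{d}{dt}F(\mu^t) \geq \kappa (F(\mu^t) - F(\mu^*))$ with $\kappa = \kappa_0/C_\sharp$, and Gr\"onwall concludes. The $W_2$-bound follows from Talagrand together with $H(\mu^t|\mu^*) \leq F(\mu^t) - F(\mu^*)$. The main technical obstacle is ensuring a strictly positive $\kappa_0$: the Young step produces a coefficient of the form $2\lambda(1-\eps) - L^2/(\eps\lambda)$, so one must carefully balance the log-Sobolev constant $\lambda$ against the Lipschitz constant $L$ of the Schr\"odinger map, both tracked in terms of $\Vert c\Vert_{\Cc^2}$ and $\Xx$, to guarantee an admissible choice of $\eps$.
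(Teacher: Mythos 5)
Your well-posedness argument is the same as the paper's (geodesic semi-convexity from Thm.~\ref{thm:smoothness} and McCann, then~\cite[Thm.~11.2.1]{ambrosio2005gradient}), and the identification of $\mu^*$ as the first marginal of the $c$-Gibbs tilt of $\nu$ is a nice explicit complement to the paper's abstract existence-by-compactness argument. Two points however.

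First, a minor slip: the minimizer you exhibit, $\mu^*(x_1)=\int_{\Xx_2}\nu(x_2)\,e^{-c(x_1,x_2)}\big/\!\int_{\Xx_1}e^{-c(x_1',x_2)}\d x_1'\,\d x_2$, depends on $\nu$, and is \emph{not} the $\nu$-independent measure of Eq.~\eqref{defdemustar}; that formula is for the multi-species Prop.~\ref{prop:multi-species} where both marginals are free.

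Second, and more importantly, the exponential-rate argument has a genuine gap. Centering the dissipation around $\mu^*$ and splitting $\int|\nabla\log(\mu^t/\mu^*)+\nabla U(\mu^t)|^2\d\mu^t$ by Young's inequality produces, as you yourself compute, a coefficient of the form $2\lambda(1-\eps)-L^2(1/\eps-1)/\lambda$ in front of $H(\mu^t|\mu^*)$; a strictly positive choice of $\eps\in(0,1)$ exists \emph{only if} $L<\lambda$. There is no reason for this to hold: $\lambda$ (Holley--Stroock) degrades like $e^{-\mathrm{osc}(c)}$ while $L$ (the $W_2\!\to\!\Cc^1$ Lipschitz constant of the Schr\"odinger map) grows with $\Vert c\Vert_{\Cc^2}$, so in general the two cannot be balanced and the Young step can have the wrong sign. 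The paper avoids this entirely by invoking the mean-field Langevin argument of~\cite{chizat2022mean,nitanda2022convex}: by convexity of $\mu\mapsto E(\mu,\nu)$ with first variation $S_1(\mu,\nu)$ one gets the "entropy sandwich" $F(\mu^t)-F(\mu^*)\le H(\mu^t\,|\,\hat\mu_t)$ where $\hat\mu_t\propto e^{-S_1(\mu^t,\nu)}$, while the dissipation is \emph{exactly} $I(\mu^t\,|\,\hat\mu_t)$, so a uniform-in-$t$ log-Sobolev inequality for $\hat\mu_t$ (Holley--Stroock, using the uniform $\Cc^0$ bound on $S_1$ from Prop.~\ref{prop:preliminaries}) closes the Gr\"onwall estimate directly, with no residual cross-term and no constant balancing. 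Your approach would need to be re-centered at $\hat\mu_t$ and to use the convexity of $E(\cdot,\nu)$ in place of the Young step.
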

For this functional, the Wasserstein gradient flow $(\mu^t)_{t\geq 0}\in \Pp(\Xx)$ solves
\begin{align}
\partial_t \mu^t = \nabla \cdot (v^t\mu^t) +\Delta \mu^t, && v^t = \nabla S_1(\mu^t,\nu).
\end{align}

\begin{proof}
We have semi-convexity along Wasserstein geodesics, by Thm.~\ref{thm:smoothness} for the first component and by a standard result due to Mc Cann~\cite{mccann1997convexity} (see~\cite[Thm.~7.28]{santambrogio2015optimal}) for the  $H$ component. Thus the general well-posedness results from~\cite[Thm.~11.2.1]{ambrosio2005gradient} applies. For the exponential convergence -- in function value and in distance -- we apply the result from~\cite[Thm. 3.2]{chizat2022mean}, see also~\cite{nitanda2022convex} where the same argument was discovered independently (although stated on $\RR^d$, the argument goes through on a compact domain). 

The main assumptions to check are that (i) $\mu\mapsto E(\mu,\nu)$ is convex, which is clear from the dual formulation Eq.~\eqref{eq:dual-MEOT} which expresses this functional as a supremum of affine forms, (ii) that a global minimizer $\mu^*$ exists, which is not difficult here since $\Pp(\Xx)$ is weakly compact, $H$ is weakly lower-semicontinuous and $E$ is weakly continuous and finally we need to check that the probability measure $\hat \mu_t \propto e^{-S_1(\mu^t,\nu)} \in \Pp(\Xx)$ satisfies a log-Sobolev inequality, uniformly in $t$ (Assumption 3 in~\cite{chizat2022mean}). 

Since $\Xx$ is bounded, the normalized Lebesgue measure satisfies a log-Sobolev inequality~\cite[Thm. 7.3]{ledoux2001logarithmic}. By the Holley-Stroock perturbation criterion~\cite{holley1986logarithmic} (see~\cite[Lem.~1.2]{ledoux2001logarithmic}), $\hat \mu_t$ satisfies it as well; this criterion
applies here because $\sup_x S_1(\mu,\nu)(x) - \inf_x S_1(\mu,\nu)(x)$ is bounded, uniformly in $\mu,\nu \in \Pp(\Xx)$ by Prop.~\ref{prop:preliminaries}. The convergence in Wasserstein distance is stated in~\cite[Cor.~3.3]{chizat2022mean} and follows from
the fact that log-Sobolev inequalities imply Talagrand inequalities~\cite[Thm.~1]{otto2000generalization}.

\end{proof}

\subsection{Convergence to equilibrium in the multi-species case}

 We now consider Wasserstein gradient flow of\footnote{Note that $F(\bmu)$ can also be written as the value of an entropic optimal transport problem but with  the Lebesgue measure as reference measure i.e. $F(\bmu)= \min_{\gamma\in \Pi(\bmu)} \int_\Xx c(x)\d\gamma(x) + H(\gamma)$.}
 \[F(\bmu):=E(\bmu)+ \sum_{i=1}^N H(\mu_i), \quad \bmu=(\mu_1, \ldots, \mu_N)\in  \prod_{i=1}^N\Pp(\Xx_i)\]
 that is 
 \begin{equation}\label{eq:multi-spec-H}
 \partial_t \mu_i -\Delta \mu_i-\nabla \cdot (\mu_i \nabla S_i(\bmu))=0, \quad i=1, \ldots, N, \; \bmu\vert_{t=0}=\bmu^0.
 \end{equation}
 Up to adding a constant to $c$ (which does not affect the dynamics \eqref{eq:multi-spec-H}) we may assume that
 \begin{equation}\label{eq:normalizc}
 \int_{\Xx} e^{-c(x)}\d x=1.   
 \end{equation}
 With this normalization, we have
 \[\inf_{\bmu \in  \prod_{i=1}^N\Pp(\Xx_i) } F(\bmu)=\inf_{\gamma\in \Pp(\Xx)} H(\gamma\vert e^{-c})=0\]
 so that $F$ admits $\bmu^*$, the marginals of $\gamma^*:=e^{-c}$, as unique minimizer    
 \begin{equation}\label{defdemustar}
 \mu_i^*(x_i)=\int_{\Xx_{-i}} e^{-c(x_i, x_{-i})} \d x_{-i}
 \end{equation}
 for every $i$ and $x_i \in \Xx_i$ and $F(\bmu^*)=0$. In the next proposition, we extend Prop.~\ref{prop: GF + equilibrium 1 species} to the multi-species case. In this case, the functional $\bmu \mapsto E(\bmu)$ is not convex anymore but we can overcome this difficulty taking advantage of the form of $F$. 
 
 \begin{proposition}\label{prop:multi-species}
 Let $\bmu^0 \in \prod_{i=1}^N\Pp(\Xx_i)$. Then there exists a unique Wasserstein gradient flow of $F$ starting from $\bmu^0$, that we call $\bmu^t$. Assume that $H(\mu^0_i)<+\infty$ for every $i$, then $\bmu^t$ converges at an exponential rate to the equilibrium $\bmu^*$, defined in Eq.~\eqref{defdemustar}, i.e. there exists $\kappa >0$ independent of $\bmu^0$ such that 
 \[
 F(\bmu^t) - F(\bmu^*) \leq e^{-\kappa t}(F(\bmu^0) - F(\bmu^*)).
 \]
 In addition, there exists a constant $C>0$, independent of $\bmu^0$, such that 
 \[
 \bW(\bmu^t, \bmu^*)^2 \leq Ce^{-\kappa t}(F(\bmu^0) - F(\bmu^*)).
 \]
 \end{proposition}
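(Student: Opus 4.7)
The plan is to combine the semi-convexity results established earlier with a log-Sobolev inequality on the joint space $\Xx$. For existence and uniqueness, I would invoke the classical theory: Thm.~\ref{thm:smoothness} gives that $E$ is $\lambda_0$-displacement semi-convex on $\prod_{i=1}^N \Pp(\Xx_i)$, while McCann's convexity theorem yields the displacement convexity of each $H(\mu_i)$. Hence $F$ is $\lambda$-convex along generalized geodesics and~\cite[Thm.~11.2.1]{ambrosio2005gradient} provides the unique Wasserstein gradient flow $\bmu^t$ of $F$ starting from $\bmu^0$, which solves~\eqref{eq:multi-spec-H}. The core reformulation is the identity
\begin{equation*}
F(\bmu) = H\bigl(\pi(\bmu) \,\big\vert\, e^{-c}\bigr),
\end{equation*}
where $\pi(\bmu) \in \Pi(\bmu)$ is the optimal entropic plan associated with $\bmu$. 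I would prove this via the chain rule for relative entropy: under the normalization~\eqref{eq:normalizc}, any $\gamma \in \Pi(\bmu)$ with $\mu = \otimes_{i=1}^N \mu_i$ satisfies $H(\gamma|\mu) = H(\gamma) - \sum_i H(\mu_i)$ (using that the $i$-th marginal of $\gamma$ is $\mu_i$), so $E(\bmu) + \sum_i H(\mu_i) = \min_{\gamma \in \Pi(\bmu)}\bigl(H(\gamma) + \int c\, d\gamma\bigr) = \min_{\gamma \in \Pi(\bmu)} H(\gamma|e^{-c})$. By~\eqref{eq:opt-primal-dual}, the minimizer $\pi(\bmu)$ has density $e^{\sum_i \phi_i(x_i) - c(x)} \prod_i \mu_i(x_i)$ with $\bphi = S(\bmu)$, so $\nabla_{x_i}\log(\pi(\bmu)/e^{-c}) = \nabla \phi_i(x_i) + \nabla\log\mu_i(x_i)$.

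Next I would establish exponential decay of $F$ by a log-Sobolev argument on $\Xx$. Along the gradient flow, combining the energy dissipation identity with the gradient computation above gives
\begin{equation*}
\frac{d}{dt} F(\bmu^t) = -\sum_{i=1}^N \int_{\Xx_i} \bigl\vert \nabla \log \mu_i^t + \nabla \phi_i^t \bigr\vert^2 d\mu_i^t = -I\bigl(\gamma^t \,\big\vert\, e^{-c}\bigr),
\end{equation*}
where $\gamma^t := \pi(\bmu^t)$ and $I(\cdot|\cdot)$ denotes the relative Fisher information on $\Xx$; the last equality uses that $\gamma^t$ has $\mu_i^t$ as $i$-th marginal, so the integrals on $\Xx_i$ and on $\Xx$ agree. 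Since $\Xx$ is convex and compact and $c \in \Cc^2(\Xx)$ is bounded, the uniform measure on $\Xx$ satisfies a log-Sobolev inequality~\cite[Thm.~7.3]{ledoux2001logarithmic}, and the Holley--Stroock perturbation lemma~\cite{holley1986logarithmic} transfers it to $e^{-c}$ with a constant $\lambda > 0$ depending only on $\Xx$ and $\Vert c\Vert_{\Cc^0}$. Combined with the previous identity this yields $F(\bmu^t) = H(\gamma^t|e^{-c}) \leq (2\lambda)^{-1} I(\gamma^t|e^{-c}) = -(2\lambda)^{-1}\frac{d}{dt}F(\bmu^t)$, so Gr\"onwall delivers $F(\bmu^t) \leq e^{-2\lambda t} F(\bmu^0)$ with $\kappa = 2\lambda$. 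Note that $F(\bmu^0) < +\infty$ because of the assumption $H(\mu_i^0) < +\infty$ and the boundedness of $E$.

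For the Wasserstein estimate, the Otto--Villani implication LSI $\Rightarrow$ $T_2$~\cite{otto2000generalization} gives $W_2(\gamma^t, e^{-c})^2 \leq (2/\lambda) H(\gamma^t|e^{-c}) = (2/\lambda) F(\bmu^t)$. For any plan $\sigma \in \Pi(\gamma^t, e^{-c})$ achieving this Wasserstein distance, its projection onto the $i$-th coordinate pair $\Xx_i \times \Xx_i$ is a coupling between $\mu_i^t$ and $\mu_i^*$, hence $W_2(\mu_i^t, \mu_i^*)^2 \leq \int \Vert y_i - x_i \Vert^2 d\sigma(x,y)$; summing over $i$ gives $\bW(\bmu^t, \bmu^*)^2 \leq W_2(\gamma^t, e^{-c})^2 \leq (2/\lambda) F(\bmu^t)$, which together with the previous step yields the claim with $C = 2/\lambda$. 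The main obstacle is recognizing the identity $F(\bmu) = H(\pi(\bmu)|e^{-c})$: once available, it bypasses the non-convexity of $E$ in the multi-species setting, because the whole convergence analysis reduces to controlling a single scalar, the relative entropy of $\gamma^t$ against $e^{-c}$, through a dimension-free log-Sobolev inequality on the product domain $\Xx$.
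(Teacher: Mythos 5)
Your proof is correct and follows essentially the same route as the paper: same well-posedness argument via displacement semi-convexity and \cite[Thm.~11.2.1]{ambrosio2005gradient}, same reformulation $F(\bmu) = H(\gamma(\bmu)\,|\,e^{-c})$ with $\gamma(\bmu)$ the optimal entropic plan (your chain-rule derivation via $H(\gamma|\mu) = H(\gamma) - \sum_i H(\mu_i)$ matches the paper's footnote; the paper's main text reaches it via the dual), same identification of the dissipation with the relative Fisher information $I(\gamma^t|e^{-c})$ using the marginal property, same Holley--Stroock log-Sobolev inequality, same Gr\"onwall step, and same Otto--Villani/Talagrand argument with coordinate projection for the $\bW$ estimate.
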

 
 \begin{proof}
 The well-posedness of the Wasserstein gradient flow is proved as previously using the geodesic semi-convexity of $F$ that follows from Thm.~\ref{thm:smoothness}. For the convergence, first note the identities
 \[E(\bmu)=\sum_{i=1}^N \int_{\Xx_i} S_i(\bmu) \d \mu_i, \quad F(\bmu)= \sum_{i=1}^N H(\mu_i \vert  e^{-S_i(\bmu)})\]
  which hold for any $\bmu \in \prod_{i=1}^N\Pp(\Xx_i)$ and easily follow from \eqref{eq:dual-MEOT} and \eqref{eq:schrodinger-system}. Let us then remark that, denoting by $\gamma(\bmu)$, the optimal entropic plan
  \[\d\gamma(\bmu)(x):=e^{-c(x)+ \sum_{i=1}^N S_i(\bmu)(x_i)} \d \mu_1(x_1) \cdots \d \mu_N( x_N)\]
and recalling that $\gamma^*=e^{-c}$  we can conveniently rewrite $F$ as a relative entropy with respect to the fixed probability measure $\gamma^*$ on $\Xx$:
 \[F(\bmu)=H(\gamma(\bmu)\vert \gamma^*).\]
Since $H(\mu^0_i)<+\infty$ for every $i$, and denoting by $\bmu^t$ the Wasserstein gradient flow of $F$ starting from $\bmu^0$, \rev{we have using the chain rule, \eqref{eq:multi-spec-H} and an integration by parts:
 \[\begin{split}
     \frac{\d}{\d t} F(\bmu^t)&=\sum_{i=1}^N \int_{\Xx_i}(S_i(\bmu^t)+\log(\mu_i^t)) \partial_t \mu_i^t   =-\sum_{i=1}^N \int_{\Xx_i} \Vert \nabla \log \mu_i^t + \nabla S_i(\bmu^t) \Vert^2  \d \mu_i^t\\
     &= -\sum_{i=1}^N I_i(\mu_i^t \vert e^{-S_i(\bmu^t)}) \end{split}\]
 }
 where $I_i(\rho \vert e^{-V})$, for $\rho \in \Pp(\Xx_i)$, stands for the relative Fisher information
 \[I_i(\rho \vert e^{-V}):=\int_{\Xx_i} \left\Vert \nabla \log\Big(\frac{\rho} {e^{-V}}\Big)\right\Vert^2 \d \rho.\]
Defining $\gamma^t:=\gamma(\bmu^t)$, we have $F(\bmu^t)=H(\gamma^t \vert \gamma^*)$ and 
\[\begin{split}
I(\gamma^t \vert \gamma^*)&:= \int_{\Xx} \left\Vert \nabla_x \log  \Big(\frac{\gamma^t(x)} {e^{-c(x)}} \Big)\right\Vert^2 \d \gamma^t(x)
=  \int_{\Xx} \sum_{i=1}^N  \left\Vert \nabla_{x_i} (\log(\mu^t_i(x_i)) + S_i(\bmu^t)(x_i))\right \Vert^2 \d \gamma^t(x)\\
&=\sum_{i=1}^N I_i(\mu_i^t \vert e^{-S_i(\bmu^t)})
\end{split} \]
where we used the fact that $\gamma^t\in \Pi(\bmu^t)$ in the last line. But since $\Xx$ is convex, it follows from the Holley-Stroock perturbation criterion~\cite{holley1986logarithmic} (see~\cite[Lem.~1.2]{ledoux2001logarithmic}), that $\gamma^*$ satisfies a log-Sobolev inequality, hence
 \[I(\gamma^t \vert \gamma^*) \geq  \kappa H(\gamma^t \vert \gamma^*)\]
 with $\kappa>0$ depending only on $\Xx$ and $c$.  We thus have
 \[\frac{\d}{\d t} F(\bmu^t)= \frac{\d}{\d t} H(\gamma^t\vert \gamma^*)=-I(\gamma^t \vert \gamma^*)   \leq -\kappa H(\gamma^t \vert \gamma^*)=-\kappa F(\bmu^t)\]
 hence
 \[F(\bmu^t) \leq e^{-\kappa t} F(\bmu^0).\]
Thanks to Talagrand's inequality, which follows from the log-Sobolev inequality~\cite[Thm.~1]{otto2000generalization}, we  get an exponential decay of $W_2(\gamma_t, \gamma^*)$ hence also an exponential decay in Wasserstein distance between the marginals of $\gamma^t$ and $\gamma^*$ i.e. of $\bW(\bmu^t, \bmu^*)$.
  \end{proof}

\bibliographystyle{plain}

\bibliography{bibli}

\end{document}